\renewcommand{\tableofcontents}{%
   \begin{center}
\begin{minipage}{100mm}
    
   \begin{center}
     
     \vspace{8pt}
     
       \bf{\contentsname}
   \end{center}
	
       \vspace{-18pt}
	
   \small
   \begin{center}
\@starttoc{toc}
   \end{center}	
\end{minipage}
	\end{center}
	\addvspace{2em \@plus\p@}
}
\providecommand{\kat}[1]{\text{\textbf{\textsl{#1}}}}
\newcommand{\isopil}{\stackrel{\raisebox{0.1ex}[0ex][0ex]{\(\sim\)}}%
			{\raisebox{-0.15ex}[0.28ex]{\(\rightarrow\)}}}
\newcommand{\RFib}{\kat{Rfib}}
\newcommand{\rfib}{\kat{Rfib}}
\newcommand{\spaces}{\mathcal{S}}
\newcommand{\QSym}{\operatorname{QSym}}
\newcommand{\name}[1]{\ulcorner #1\urcorner}
\newcommand{\DD}{D}
\newcommand{\el}{\operatorname{el}}
\newcommand{\tw}{\operatorname{tw}}
\newcommand{\sd}{\operatorname{sd}}
\newcommand{\isleftadjointto}{\dashv}
\newcommand{\PrSh}{\kat{PrSh}}
\newcommand{\Sh}{\kat{Sh}}
\newcommand{\C}{\mathcal{C}}
\newcommand{\E}{\mathcal{E}}
\newcommand{\F}{\mathcal{F}}
\newcommand{\un}{\underline}
\newcommand{\inertto}{\rightarrowtail}
\newcommand{\actto}{\rightarrow\Mapsfromchar}
\DeclareRobustCommand\upperstar{%
  \mathchoice%
    {\kern0pt\raise0.55ex\hbox{$\displaystyle *$}\kern0.8pt}% \displaystyle
    {\kern0pt\raise0.58ex\hbox{$\textstyle *$}\kern0.8pt}% \textstyle
    {\kern0pt\raise0.45ex\hbox{$\scriptstyle *$}\kern0.4pt}% \scriptstyle
    {\kern0pt\raise0.4ex\hbox{$\scriptscriptstyle *$}\kern0.2pt}% \scriptscriptstyle
}%
\DeclareRobustCommand\lowerstar{%
  \mathchoice%
    {\kern0pt\raise-0.65ex\hbox{$\displaystyle *$}\kern0.8pt}% \displaystyle
    {\kern0pt\raise-0.68ex\hbox{$\textstyle *$}\kern0.8pt}% \textstyle
    {\kern0pt\raise-0.55ex\hbox{$\scriptstyle *$}\kern0.4pt}% \scriptstyle
    {\kern0pt\raise-0.5ex\hbox{$\scriptscriptstyle *$}\kern0.2pt}% \scriptscriptstyle
}%
\newcommand{\lowershriek}{_!}
\newcommand{\op}{^{\text{{\rm{op}}}}}
\newcommand{\N}{\mathbb{N}}
\newcommand{\Hom}{\operatorname{Hom}}
\newcommand{\Arr}{\operatorname{Arr}}
\newcommand{\dom}{\operatorname{dom}}
\newcommand{\codom}{\operatorname{codom}}
\newcommand{\cart}{\operatorname{cart}}
\def\overarrow#1{{\vec{#1}}}
\def\nondeg{\overarrow}
\DeclareMathAlphabet{\mathbbe}{U}{bbold}{m}{n}
\newcommand{\simplexcategory}{\mathbbe{\Delta}}
\newcommand{\Deltainert}{\simplexcategory_{\operatorname{inert}}}
\newcommand{\Deltaact}{\simplexcategory_{\operatorname{act}}}
\newcommand{\drpullback}{\arrow[phantom]{dr}[very near start,description]{\lrcorner}}
\tikzset{
  act /.tip = >|
}
\tikzset{
  onto/.style={/tikz/commutative diagrams/twoheadrightarrow}
}
\tikzset{
  into/.style={/tikz/commutative diagrams/rightarrowtail}
}
\newtheorem{lemma}{Lemma}[subsection]
\newtheorem{prop}[lemma]{Proposition}
\newtheorem{thm}[lemma]{Theorem}
\newtheorem{theorem}[lemma]{Theorem}
\newtheorem{cor}[lemma]{Corollary}
\theoremstyle{definition}
\newtheorem{taller}[lemma]{$\!\!$}
\newtheorem{example}[lemma]{Example}
\newenvironment{blanko}[1]%
{\begin{taller}{\normalfont\bfseries  #1}\normalfont}%
{\end{taller}}
\tikzset{%
scalearrow/.style n args={3}{
  decoration={
    markings,
    mark=at position (1-#1)/2*\pgfdecoratedpathlength
      with {\coordinate (#2);},
    mark=at position (1+#1)/2*\pgfdecoratedpathlength
      with {\coordinate (#3);},
    },
  postaction=decorate,
  }
}
\begin{document}

%%%%%%%%%%%%%%%%%%%%%%%%%%%%%%%%%%%%%%%%%
%                                       %
   \title{Free decomposition spaces}   
%                                       %
%%%%%%%%%%%%%%%%%%%%%%%%%%%%%%%%%%%%%%%%%

\author[$\star$]{Philip Hackney}

\author[$\star\star$]{Joachim Kock}

\footnotesize

\affil[$\star$]{University of Louisiana at Lafayette}

\affil[$\star\star$]{Universitat Aut\`onoma de Barcelona and Centre de Recerca 
Matem\`atica; \newline
currently at the University of Copenhagen}

\normalsize

\date{}

\maketitle

\vspace*{-4mm}

\begin{abstract}
  We introduce the notion of free decomposition spaces: they are simplicial 
  spaces freely generated by inert maps.
  We show that left Kan extension along the inclusion
  $j \colon \Deltainert \to \simplexcategory$
  takes
  general objects to M\"obius decomposition spaces and general maps to CULF
  maps. 
  We establish an equivalence of $\infty$-categories
  $\PrSh(\Deltainert) \simeq \kat{Decomp}_{/B\N}$. Although free decomposition
  spaces are rather simple objects, they abound in combinatorics: it seems that
  all comultiplications of deconcatenation type arise from free decomposition
  spaces. We give an extensive list of examples, including quasi-symmetric
  functions. 
\end{abstract}

\tableofcontents

%%%%%%%%%%%%%%%%%%%%%%%%%%%%%%%%%%%%%%%%%%%%%%%%%%
\section*{Introduction}
%%%%%%%%%%%%%%%%%%%%%%%%%%%%%%%%%%%%%%%%%%%%%%%%%%
\addcontentsline{toc}{section}{Introduction}

%%%%%%%%%%%%%%%%%%%%%%%%%%%%%%%%%%%%%%%%%%%%%%%%%%
\subsection*{Background}
%%%%%%%%%%%%%%%%%%%%%%%%%%%%%%%%%%%%%%%%%%%%%%%%%%

\noindent
{\bf M\"obius inversion.}
 The motivation for this work comes from the theory of M\"obius
  inversion in incidence algebras of posets (Rota~\cite{Rota:Moebius}),
  which is an important tool in enumerative combinatorics, as
  well as in application areas such as probability theory,
  algebraic topology, and renormalization, just to mention a
  few. The idea is generally to write down counting functions on
  suitable posets (or more precisely on their incidence
  coalgebras), and then express recursive relations in such a
  way that the solution is given in terms of convolution with
  the M\"obius function. In this work we are not directly 
  concerned with counting problems or M\"obius functions, but 
  rather with features of the overall framework.

  \bigskip
  
  \noindent
{\bf Decomposition spaces.} 
The starting point is the recent discovery that
objects more general than posets admit the
construction of incidence algebras and M\"obius inversion: these are
called {\em decomposition spaces} by G\'alvez, Kock, and Tonks
\cite{GKT1}, \cite{GKT2}, and they are the same thing as the {\em
$2$-Segal spaces} of Dyckerhoff and Kapranov
\cite{Dyckerhoff-Kapranov:1212.3563} who were motivated mainly by
representation theory and homological algebra. (The last ingredient in the 
equivalence
between the two notions, the fact that a certain unitality condition is 
automatic, was provided only recently by Feller et
al.~\cite{FGKPW}.) 

From the viewpoint of combinatorics, the importance of 
decomposition spaces is that many combinatorial coalgebras, bialgebras, and 
Hopf algebras are not directly the incidence coalgebra 
of any poset, but 
that it is often possible instead to realize them as incidence 
coalgebras of a decomposition space (as illustrated in \cref{section examples} below), and in this way make 
standard tools available. The price to pay is that the theory 
of decomposition spaces is more technical than that of posets.
In particular,  a little bit of background in simplicial 
homotopy theory and 
category theory is assumed as a prerequisite for this work, and their 
basic vocabulary is used freely. 
We refer to the introductions and preliminaries sections of
the papers \cite{GKT1,GKT2,GKT:ex} for background material on the use of 
simplicial methods for combinatorial coalgebras.

Briefly (cf.~\ref{decomp-def} below), a decomposition space is
a simplicial
space satisfying the exactness condition (weaker than the
Segal condition) stating that active-inert pushouts in the simplex
category $\simplexcategory$ are sent to homotopy pullbacks. The
active-inert factorization system was already an important ingredient
in the combinatorics of higher algebra, as in Lurie's
book~\cite{Lurie:HA}, where the terminology originates.

\bigskip

In the present work, we explore further the fundamental relationship between 
decomposition spaces and the active-inert factorization system, and 
single out a new class of decomposition space, the {\em free decomposition 
spaces},
being simplicial spaces freely generated by inert maps.
We show that most comultiplications in algebraic combinatorics
of deconcatenation type arise from free decomposition spaces.

\bigskip

\noindent
{\bf Active and inert maps for categories and higher categories.}
  The {\em active} maps in the simplex category $\simplexcategory$ are the 
  endpoint-preserving maps; the {\em inert} maps are the distance-preserving 
  maps (cf.~\ref{active-inert} below). 
  The two classes of maps form a factorization system $\simplexcategory = (\simplexcategory_{\text{active}},
\Deltainert)$ first studied (by Leinster and Berger~\cite{Berger:Cellular}) to
express the interplay between algebra and geometry in the notion
of category: for the nerve of a category, the active maps parametrize the
algebraic operations of composition and identity arrows, while the inert maps
express the bookkeeping that these operations
are subject to, namely source and target. The geometric nature of this
background fabric is manifest in the fact that the category $\Deltainert$ has a
natural Grothendieck topology, through which the gluing conditions are
expressed: a simplicial set $X$ is the nerve of a category if and only if
$j\upperstar (X)$, the restriction of $X$ along 
$j\colon\Deltainert\to\simplexcategory$, is a sheaf.
This is one form of
the classical nerve theorem, which characterizes the essential image of the nerve functor.
In
particular, the question whether a simplicial set is the nerve of a category
depends only on the inert part.
This viewpoint is the starting point for the Segal--Rezk approach to
$\infty$-categories, defined by replacing simplicial sets by
simplicial spaces, and considering the sheaf condition up to homotopy.
Many other developments exploit the active-inert machinery to obtain nerve 
theorems in fancier contexts, including Weber's
extensive theory of local right adjoint monads and monads with
arities, with abstract nerve theorems~\cite{Weber:TAC13},
\cite{Weber:TAC18}, \cite{Berger-Mellies-Weber:1101.3064}, as well as
special nerve theorems for specific operad-like structures (polynomial
monads in terms of trees~\cite{Kock:0807},
\cite{Gepner-Haugseng-Kock:1712.06469}, properads in terms of directed
graphs~\cite{Kock:1407.3744}, modular operads and wheeled properads as
well as infinity 
versions~\cite{Hackney-Robertson-Yau},~\cite{HACKNEY2020107206},
and so on); see \cite{Hackney:2208.13852} for a survey.
Recently Chu
and Haugseng~\cite{Chu-Haugseng:1907.03977} have even developed a
general Segal approach to operad-like structures in terms of {\em algebraic
patterns}, where the notion of active-inert factorization system is
taken as primitive.

\bigskip

\noindent
{\bf Active and inert maps for decomposition spaces.}
For decomposition spaces, the active-inert interplay is more subtle than for categories, and the
exactness condition that characterizes them can no longer be measured on the inert maps
alone. It is now about {\em decomposition} of `arrows' rather than composition.
Roughly, the active maps encode all possible ways to decompose arrows, and the
inert maps then separate out the pieces of the decomposition. The exactness
condition characterizing decomposition spaces combines active and inert maps. It
can be interpreted as a locality condition, stating roughly that the possible
decompositions of a local region are not affected by anything outside the
region~\cite{GKT:restr}, \cite{GKT:ex}.

\bigskip

\noindent
{\bf Active and inert maps for morphisms.}
Turning to morphisms, the situation is more complex for 
decomposition spaces than for categories. For categories, the nerve functor is 
fully faithful, meaning that all simplicial maps are relevant: the simplicial 
identities for simplicial maps simply say that source and target, composition 
and identities are preserved. For decomposition spaces, this is no longer the 
case, as there are different ways in which a simplicial map could be said to 
preserve decompositions. The most well-behaved class of simplicial maps in this
respect are the CULF maps~\cite{GKT1} (standing for `conservative' and `unique 
lifting of factorizations'), a class of maps well studied in category 
theory, originating with Lawvere's work on dynamical 
systems~\cite{Lawvere:statecats}, and exploited in computer science 
in the algebraic semantics of processes~\cite{Bunge-Niefield}, \cite{Johnstone:Conduche'}, 
 \cite{Bunge-Fiore}. From the viewpoint of combinatorics the interest 
 in 
CULF maps is that they preserve decompositions in a way such as to induce coalgebra homomorphisms
between incidence coalgebras \cite{Leroux:1976}, 
\cite{Lawvere-Menni}, \cite{GKT1}. 
The formal characterization of CULF maps is that
when interpreted as natural transformations, 
they are cartesian on active maps. Independently of the
coalgebra interpretation, this pullback condition interacts very well with the
exactness condition characterizing decomposition spaces.

\bigskip

%%%%%%%%%%%%%%%%%%%%%%%%%%%%%%%%%%%%%%%%%%%%%%%%%%
\subsection*{Contributions of the present paper}
%%%%%%%%%%%%%%%%%%%%%%%%%%%%%%%%%%%%%%%%%%%%%%%%%%

In the present work, the focus is not so much on restriction along $j \colon
\Deltainert \to \simplexcategory$ as in the Segal case, but rather on its left
adjoint $j\lowershriek \isleftadjointto j\upperstar$, left Kan extension along
$j$. Given a presheaf $A\colon\Deltainert\op\to\spaces$ (with 
values in spaces), we are thus concerned with the 
simplicial space $j\lowershriek (A)\colon \simplexcategory\op\to\spaces$.
It is rarely the case that $j\lowershriek (A)$ is Segal.
We show instead that $j\lowershriek (A)$ is always a decomposition 
space:

\bigskip

  {\bf Theorem.} (Cf.~Proposition~\ref{prop:culf} and Corollary~\ref{j=decomp}.)
  {\em For any $A\colon \Deltainert\op\to\spaces$, the left Kan extension
  $j\lowershriek (A)\colon \simplexcategory\op\to\spaces$ is a 
  M\"obius decomposition space, and for any map $A' \to A$ of 
  $\Deltainert\op$-diagrams, we have that $j\lowershriek (A')\to 
  j \lowershriek (A)$ is CULF.}
  
  \bigskip

The decomposition spaces that arise with $j\lowershriek$ we call {\em free
decomposition spaces}.

\bigskip

A key to understand the relationship between $\Deltainert$-presheaves and 
decomposition spaces is the action of $j\lowershriek$ on the terminal presheaf $1 
\in \PrSh(\Deltainert)$. The following result is just a calculation:

\bigskip

  {\bf Lemma~\ref{lemma:BN}.}
  {\em We have $j\lowershriek(1) \simeq B\N$, the classifying space of the natural numbers.}

\bigskip

It follows that free decomposition spaces admit a CULF map 
to $B\N$. In fact this feature characterizes free decomposition spaces:

\bigskip

{\bf Proposition.} (Cf.~Corollary~\ref{cor:free}.)
{\em A decomposition space is free if and only if it admits a CULF map to 
$B\N$.
}

\bigskip

We derive 
this from the following more precise result.

\bigskip

  {\bf Theorem~\ref{main thm free}.}
  {\em The $j\lowershriek$ functor induces an equivalence of 
  $\infty$-categories}
  $$
  \PrSh(\Deltainert) \simeq \kat{Decomp}_{/B\N} .
  $$
  
\noindent
Here $\kat{Decomp}$ is the $\infty$-category of decomposition spaces and CULF 
maps, and $\kat{Decomp}_{/B\N}$ its slice over $B\N$.

The proof of this result turned out to be quite involved, 
and ended up developing into a proof of the following
very general result, which is an $\infty$-version of a
theorem of Kock and Spivak~\cite{Kock-Spivak:1807.06000}:

\bigskip

{\bf Theorem~\ref{thm:untwist}.}
{\em
  For $\DD$ a decomposition space, there is a natural 
equivalence of $\infty$-categories}
$$
\kat{Decomp}_{/\DD} \simeq \rfib(\tw\DD) .
$$

\noindent 
Here $\tw(\DD)$ denotes the edgewise subdivision of the simplicial space $\DD$
--- when $\DD$ is a (Rezk complete) decomposition space, this is an $\infty$-category~\cite{HK-untwist}, called the
twisted arrow category of $\DD$. The right-hand side $\rfib(\tw\DD)$ is the
$\infty$-category of right fibrations over $\tw(\DD)$, which is equivalent to
$\PrSh(\tw\DD)$ under the basic equivalence between  
right fibrations and presheaves (see for example 
\cite[Theorem~3.4.6]{AyalaFrancis:Fibrations}). 

\bigskip

In order to apply the general theorem, take 
$\DD=B\N$,
and note the following:

\bigskip

{\bf Lemma~\ref{lemma twist BN}.}
  {\em There is a natural equivalence of categories}
  $$
  \Deltainert \simeq  \tw(B\N)  .
  $$

  Theorem~\ref{main thm free} follows essentially from this observation 
and the general theorem, but there is still some work to do to show
that in this special case, the untwisting of the general theorem can
actually be identified with left Kan extension along $j$, surprisingly.

\bigskip

Since the general theorem is of independent interest, and since the proof 
is very long, we have separated it out into a paper on its 
own~\cite{HK-untwist}.

\bigskip

Having characterized free decomposition spaces as those admitting a 
CULF map to $B\N$, it is interesting to know that such a map is 
unique if it exists. This statement is equivalent to the following 
result.

\bigskip

  {\bf Proposition~\ref{proposition ff}.}
  {\em The forgetful functor $\kat{Decomp}_{/B\N} \to \kat{Decomp}$ is fully faithful.}

\bigskip

Together with \cref{main thm free}, this implies a fundamental property of 
$j\lowershriek$:

\bigskip

  {\bf Corollary~\ref{cor j! ff}.}
  {\em The functor $j\lowershriek \colon \PrSh(\Deltainert) \to \kat{Decomp}$ is fully faithful.}

\bigskip

Theorem~\ref{main thm free} readily implies the 
following classical and more special result due to 
Street~\cite{Street:categorical-structures}: {\em a category admits a CULF functor 
to $B\N$ if and only if it is free on a directed graph}.

We also characterize a large class of free decomposition spaces in terms of
a class of species called {\em restriction $\mathbb{L}$-species} 
(Proposition~\ref{prop:restrL}).

\bigskip

Although free decomposition spaces are rather simple, they abound in 
combinatorics. Generally it seems that all comultiplications of 
deconcatenation type (i.e.~in the spirit of splitting a word into 
a prefix and a postfix) arise from free decomposition spaces. In 
Section~\ref{section examples} we 
illustrate and substantiate this principle by giving a long list of
examples of deconcatenation-type comultiplications and the free 
decomposition spaces they are incidence coalgebras of. This includes 
many variation on paths and words, including parking functions, Dyck 
paths, noncrossing partitions, as well as processes of transition systems, Petri 
nets, and rewrite systems. In particular, the comultiplication of the
Hopf algebra of quasi-symmetric functions $\QSym$ is shown to be the 
incidence coalgebra of a free decomposition space $Q$, namely that of 
words in the alphabet $\N_+$.

%%%%%%%%%%%%%%%%%%%%%%%%%%%%%%%%%%%%%%%%%%%%%%%%%%
\section{Preliminaries}
%%%%%%%%%%%%%%%%%%%%%%%%%%%%%%%%%%%%%%%%%%%%%%%%%%

We run through some standard material just to set up notation.

%%%%%%%%%%%%%%%%%%%%%%%%%%%%%%%%%%%%%%%%%%%%%%%%%%
\subsection{Active and inert maps}
%%%%%%%%%%%%%%%%%%%%%%%%%%%%%%%%%%%%%%%%%%%%%%%%%%
As usual, $\simplexcategory$ denotes the category of finite nonempty ordinals
$$
[n] = \{0 < 1 < \cdots < n\} .
$$
This category admits a presentation by generators (the coface and codegeneracy maps; see \eqref{eq active inert generators} below) and relations (the cosimplicial identities).

\begin{blanko}{The active-inert factorization system.}\label{active-inert}
  The category $\simplexcategory$ has an active-inert factorization system:
every map factors uniquely as an active map followed by an inert map.
  The {\em active maps}, written $g\colon[k]\actto [n]$, are those that  
  preserve end-points,
  $g(0)=0$ and $g(k)=n$; the {\em inert maps}, written $f\colon 
  [m]\rightarrowtail [n]$, are
  those that are distance preserving,
  $f(i{+}1)=f(i)+1$ for $0\leq i\leq m-1$.  
  The active maps are generated by
  the codegeneracy maps and the inner coface maps; the inert maps are
  generated by the outer coface maps $d^\bot$ and $d^\top$.
This is illustrated in the following diagram of generating maps of $\simplexcategory$.
\begin{equation}\label{eq active inert generators}
\begin{tikzcd}[sep=large]
  {[0]} \rar["d^1", shift left=1.5, color=blue, tail] \rar["d^0"', shift right=1.5, color=blue, tail] &
  {[1]} \rar["d^2", shift left=3, color=blue, tail] \rar["d^0"', shift right=3, color=blue, tail] 
\lar[shorten <= 0.5em, shorten >= 0.5em, color=purple, -act]
\rar[color=purple, -act]
  &
  {[2]}  \rar["d^3", shift left=4.5, color=blue, tail] \rar["d^0"', shift right=4.5, color=blue, tail] 
\lar[shift left=1.5,shorten <= 0.5em, shorten >= 0.5em, color=purple, -act]
\lar[shift right=1.5,shorten <= 0.5em, shorten >= 0.5em, color=purple, -act]
\rar[shift left=1.5, color=purple, -act] \rar[shift right=1.5, color=purple, -act]
  & 
  {[3]} 
\rar["d^4", shift left=6, color=blue, tail] \rar["d^0"', shift right=6, color=blue, tail] 
\rar[shift left=3, color=purple, -act]
\rar[shift right=3, color=purple, -act]
\rar[color=purple, -act]
\lar[shift left=3, shorten <= 0.5em, shorten >= 0.5em, color=purple, -act]
\lar[shift right=3, shorten <= 0.5em, shorten >= 0.5em, color=purple, -act]
\lar[shorten <= 0.5em, shorten >= 0.5em, color=purple, -act]
  & \cdots 
\lar[shift left=4.5, shorten <= 0.5em, shorten >= 0.5em, color=purple, -act]
\lar[shift right=4.5, shorten <= 0.5em, shorten >= 0.5em, color=purple, -act]
\lar[shift left=1.5, shorten <= 0.5em, shorten >= 0.5em, color=purple, -act]
\lar[shift right=1.5, shorten <= 0.5em, shorten >= 0.5em, color=purple, -act]
  \end{tikzcd}
\end{equation}
  (This orthogonal factorization system is an instance of the 
  important general notion of generic-free factorization system of
  Weber~\cite{Weber:TAC18}, who referred to the two classes as generic
  and free. The active-inert terminology is due to
  Lurie~\cite{Lurie:HA}.)
\end{blanko}

\begin{blanko}{Active maps vs.~$k$-tuples.}\label{Act=N^k}\label{gamma}
  For fixed  $k\in \N$, write $\operatorname{Act}(k)$ for the set of active maps out of 
  $[k]$.
  For each $n$ there is a unique active map $[1]\actto [n]$, which implies that $\operatorname{Act}(1) \cong \mathbb{N}$.
    For $i=1,\ldots,k$, write $\rho_i : [1] \rightarrowtail [k]$ for the inert map whose image is $i-1,i$.
  For an active map $\alpha: [k] \actto [n]$, write $[n_i]$ for the 
  ordinal appearing in the active-inert factorization of $\alpha 
  \circ \rho_i$:
  \[
  \begin{tikzcd}
  {[1]} \ar[d, rightarrowtail, "\rho_i"'] \ar[r, dotted, -act] & {[n_i]} \ar[d, 
  rightarrowtail, dotted, 
  "\gamma^\alpha_i"] \\
  {[k]} \ar[r, -act, "\alpha"']& {[n]}
  \end{tikzcd}
  \]
  This defines a $k$-tuple $(n_1,\ldots,n_k)\in \N^k$.
  
  Conversely, given a $k$-tuple $(n_1,\ldots,n_k) \in \N^k$, define an active 
  map  $\alpha\colon [k] \actto [n]$ (with $n:=\sum_{1\leq i\leq k} n_i$) by sending
  $j\in [k]$ to $\sum_{1\leq i\leq j} n_i \in [n]$. Clearly $\alpha(0)=0$ and 
  $\alpha(k)=n$, so $\alpha$ is indeed active.

  These assignments are inverse to each other so as to define a bijection
  $$
  \operatorname{Act}(k) \cong \N^k .
  $$
  (Below (in \ref{lem:ArractDeltacart=elBN}) we shall vary $k$ and fit these bijections into an isomorphism of 
  categories.)
\end{blanko}

%%%%%%%%%%%%%%%%%%%%%%%%%%%%%%%%%%%%%%%%%%%%%%%%%%
\subsection{Decomposition spaces and incidence coalgebras}
%%%%%%%%%%%%%%%%%%%%%%%%%%%%%%%%%%%%%%%%%%%%%%%%%%

\begin{blanko}{Decomposition spaces.}\label{decomp-def}
  Active and inert maps in $\simplexcategory$ admit pushouts along each other,
  and the resulting maps are again active and inert.  A {\em
  decomposition space} \cite{GKT1} is a simplicial 
  space\footnote{As is common in modern categorical homotopy theory, `space' 
  means $\infty$-groupoid. The collection of spaces forms the $\infty$-category $\spaces$, 
  and simplicial spaces and so on therefore also form $\infty$-categories.
  The amount of $\infty$-category theory needed in this paper is rather 
  limited, and we can work with $\infty$-categories 
  model independently, as is often done in the theory of decomposition 
  spaces~\cite{GKT1}, \cite{GKT2}, \cite{GKT3}. For specificity, the reader can 
  take the model of quasi-categories of Joyal~\cite{quadern45} and 
  Lurie~\cite{HTT}, where in particular, $\infty$-groupoid means Kan 
  complex.} 
  $X\colon \simplexcategory\op\to\spaces$ that
  takes all such active-inert pushouts to pullbacks:
  \[
  X\left(
  \begin{tikzcd}
  	{[n']} \drpullback &  {[n]} \ar[l, >->]\\
		{[m']} \ar[u, ->|] & 
		{[m]} \ar[u, ->|] \ar[l, >->]
  \end{tikzcd}
  \right)
  \qquad=\quad
  \begin{tikzcd}
  	X_{n'}\ar[r]\ar[d]&
		X_n\ar[d]\\
		X_{m'}\ar[r]&
		X_{m}\ar[ul, very near end, phantom, "\lrcorner"]   .
  \end{tikzcd}
  \]
  Every Segal space is also a decomposition space~\cite[Prop.~3.7]{GKT1},
  \cite[Prop.~2.5.3]{Dyckerhoff-Kapranov:1212.3563}. In particular, posets and categories 
  are decomposition spaces, via the nerve construction.
\end{blanko}

\begin{blanko}{Incidence coalgebras.}
  The motivation for the notion of decomposition space is that they admit the 
  construction of coassociative coalgebras~\cite{GKT1}, \cite{GKT2}, generalizing the classical theory of
  incidence coalgebras of posets developed by Rota~\cite{Rota:Moebius}, 
  \cite{Joni-Rota} in the 1960s. Just as incidence coalgebras of posets are 
  spanned linearly by the poset intervals, the incidence coalgebra of a 
  decomposition space $X$ is spanned linearly by $X_1$. The comultiplication
  (which generalizes the case of posets) is given by (for $f\in X_1$)
  $$
  \Delta(f) = \sum_{\underset{d_1(\sigma)=f}{\sigma\in X_2}} d_2(\sigma) 
  \otimes d_0(\sigma) 
  $$
  which verbalizes into `sum over all $2$-simplices with long edge $f$ and 
  return the two short edges.'
\end{blanko}

\begin{blanko}{CULF maps.}
  A simplicial map $F\colon Y\to X$ between simplicial spaces is 
  called \emph{CULF}~\cite{GKT1} (standing for {\em conservative} and having 
  {\em unique lifting of factorizations}) when it is cartesian on active maps 
  (i.e.~the naturality squares on active maps are pullbacks).
  If $X$ is a decomposition space (e.g.~a Segal space) and 
  $F\colon Y \to X$ is CULF,
  then also $Y$ is a decomposition space (but not in general
  Segal).
  We denote by $\kat{Decomp}$ the $\infty$-category of decomposition spaces and 
  CULF maps.
  
  From the viewpoint of incidence coalgebras, the interest in CULF maps is that
  the incidence coalgebra construction is functorial (covariantly) in CULF 
  maps~\cite{GKT1}.
\end{blanko}

\begin{blanko}{Finiteness conditions and M\"obius decomposition spaces.}\label{blanko finiteness}
  The theory of incidence coalgebras of decomposition spaces
  is natively `objective,' meaning that the constructions deal 
  directly with the combinatorial objects rather than with the vector spaces they 
  span~\cite{GKT1}. At this level the theory does not require any finiteness conditions.
  However, in order to be able to take (homotopy) cardinality to arrive at
  coalgebras in vector spaces as in classical combinatorics, it is necessary
  to impose certain finiteness conditions~\cite{GKT2}.
   A decomposition space is {\em locally finite} when the maps $X_0 \stackrel{s_0}\to X_1 
  \stackrel{d_1}\leftarrow X_2$ are (homotopy) finite~\cite{GKT2}. Often they are also 
  discrete, in which case $X$ is called {\em locally discrete}. The first condition 
  ensures that the general incidence-coalgebra construction admits a (homotopy) 
  cardinality. The discreteness condition ensures that the sum formula for 
  comultiplication is free from denominators. An important class of locally finite
  decomposition spaces, called {\em M\"obius decomposition spaces}, are 
  characterized by (a completeness condition and) one further finiteness condition, 
  namely that for every 
  $1$-simplex $f\in X_1$ there are only finitely many higher nondegenerate 
  simplices with long edge $f$ (recall that the long edge of $\sigma \in X_n$ is the pullback of $\sigma$ along the unique active map $[1] \actto [n]$). As suggested by the terminology, M\"obius decomposition spaces admit a 
  M\"obius inversion principle~\cite{GKT2}. M\"obius decomposition spaces that 
  are nerves of ordinary categories are precisely the M\"obius categories 
  of Leroux~\cite{Leroux:1976}. When the category is a poset, the 
  condition is equivalent to being locally finite, as in the classical 
  theory of Rota~\cite{Rota:Moebius}.
  
  We shall not need to verify any of these finiteness conditions directly. We shall only 
  need the fact \cite{GKT2} that if $Y \to X$ is CULF and if $X$ is locally 
  finite, locally discrete, or M\"obius, then so is $Y$. (And then we shall 
  invoke the fact that $B\N$ has all three properties.)
\end{blanko}

%%%%%%%%%%%%%%%%%%%%%%%%%%%%%%%%%%%%%%%%%%%%%%%%%%
\section{Free decomposition spaces}
%%%%%%%%%%%%%%%%%%%%%%%%%%%%%%%%%%%%%%%%%%%%%%%%%%

%%%%%%%%%%%%%%%%%%%%%%%%%%%%%%%%%%%%%%%%%%%%%%%%%%
\subsection{Left Kan extension along \texorpdfstring{$j$}{j}}\label{ss lke}
%%%%%%%%%%%%%%%%%%%%%%%%%%%%%%%%%%%%%%%%%%%%%%%%%%

The input data for free construction is a $\Deltainert$-presheaf; the category $\Deltainert$ admits a presentation as follows (compare to \eqref{eq active inert generators} on page \pageref{eq active inert generators}).
\begin{equation}\label{eq deltainert}
  \begin{tikzcd}
  {[0]} \ar[r, "d^\top", shift left] \ar[r, "d^\bot"', shift right] &
  {[1]} \ar[r, "d^\top", shift left] \ar[r, "d^\bot"', shift right] &
  {[2]}  \ar[r, "d^\top", shift left] \ar[r, "d^\bot"', shift right] & 
  {[3]}  \ar[r, "d^\top", shift left] \ar[r, "d^\bot"', shift right] & \cdots & d^\top d^\bot = d^\bot d^\top .
  \end{tikzcd}
\end{equation}
Throughout, $j\colon \Deltainert \to \simplexcategory$ denotes the inclusion 
functor.
Our immediate goal is to give a formula for the left Kan extension along $j$ in \cref{j_!A}.
We first prove a general lemma concerning factorization systems.

\begin{blanko}{Factorization systems and cartesian fibrations.}
  Let $\C$ be an $\infty$-category with a factorization system
  $(\E,\F)$. Let $\Arr(\C) = \operatorname{Fun}(\Delta^1,\C)$ be the 
  $\infty$-category of arrows, with the cartesian fibration
  $\dom  \colon \Arr(\C) \to \C$. Consider now the full subcategory 
  spanned by the arrows in $\E$, denoted $\Arr^\E(C)$.  This is again
a cartesian fibration, and the cartesian arrows are
those for which the codomain component is in $\F$ \cite[Lemma 1.3]{GKT3}. 
If we take only those,
we thus get a right fibration $\Arr^\E(\C)^{\cart} \to \C$. 
% HTT Corollary 2.4.2.5
This corresponds to a presheaf $\C\op\to\spaces$, sending  an object $x$
to the space of $\E$-arrows out of $x$. 
(The inclusion $\Arr^\E(\C) \to \Arr(\C)$ does not preserve cartesian 
arrows, but it has a right adjoint which sends an arrows to its 
$\E$-factor, and this right adjoint does preserve cartesian arrows.)
\end{blanko}

\begin{lemma}\label{lem fact system act}
Suppose that $(\E,\F)$ is a factorization system on  $\C$.
  For a presheaf $A\colon \F\op \to \spaces$ with corresponding right fibration
  $\widetilde A \to \F$, the left Kan extension along $j\colon \F \to \C$ corresponds to the 
  left-hand composite in the pullback diagram
    $$\begin{tikzcd}
j\lowershriek (\widetilde{A}) \drpullback \ar[r] \ar[d] &  \widetilde{A} \ar[d] \\
\Arr^{\E}(\C)^{\cart} 
\ar[d, "\dom"'] \ar[r, "\codom"'] & 
\F \\
\C &
\end{tikzcd}
$$

\end{lemma}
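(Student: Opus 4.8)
The plan is to unwind what the left Kan extension $j\lowershriek(\widetilde A)$ computes at an object $x \in \C$, identify it with a space of pairs, and recognize that space as the fiber of the asserted left-hand composite. Concretely, writing $A \colon \F\op \to \spaces$ for the presheaf classified by $\widetilde A \to \F$, the pointwise formula for left Kan extension gives
$$
j\lowershriek(\widetilde A)(x) \;\simeq\; \colim_{(y \to x) \in (j \downarrow x)} A(y),
$$
where the colimit is over the comma $\infty$-category whose objects are arrows $f\colon j(y) \to x$ with $y \in \F$. First I would argue that this comma category has a reflective (final) subcategory: using the factorization system, every arrow $j(y) \to x$ factors as $j(y) \twoheadrightarrow \bullet \rightarrowtail x$ through an $\E$-map followed by an $\F$-map (here I use that $\F$ is closed under the relevant composites so the middle object lies in $\F$), and this exhibits the full subcategory of $\F$-arrows into $x$ as final. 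Finality lets me replace the colimit by one indexed on $\F_{/x}$, but more usefully, since $A$ is a \emph{right} fibration (a \emph{presheaf}, i.e.\ a colimit of representables is not what we want) — rather, I would instead run the dual/straightening argument: the colimit of $A$ over $(j\downarrow x)$ is the space of sections, equivalently the total space of the right fibration $\widetilde A \times_{\F} (j \downarrow x) \to (j\downarrow x)$, and finality says this is computed by restricting to the final subcategory of $\F$-valued arrows.

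The cleanest route, which I would actually write, avoids pointwise colimits altogether and works fibrationally. By the discussion preceding the lemma, $\codom \colon \Arr^{\E}(\C)^{\cart} \to \F$ is the right fibration classifying the presheaf $x' \mapsto \Arr^{\E}(\C)(x', -)$-ish data; more precisely its fiber over $y \in \F$ is the space of $\E$-arrows out of $y$, wait — I need the fiber over an object of $\F$ under $\codom$, which is the space of $\E$-arrows \emph{with codomain component landing appropriately}; reading \cite[Lemma 1.3]{GKT3} as quoted, $\Arr^\E(\C)^{\cart} \to \C$ is the right fibration for $x \mapsto \{\E\text{-arrows out of } x\}$ via $\dom$, and the composite $\codom$ then records the $\F$-target. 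The key identification is therefore this: the pullback $j\lowershriek(\widetilde A) := \widetilde A \times_{\F} \Arr^{\E}(\C)^{\cart}$ has, over a point $x \in \C$ reached via $\dom$, fiber equal to the space of pairs $\big(\,[x \twoheadrightarrow y] \text{ an } \E\text{-arrow},\ a \in A(y)\,\big)$ — and by the factorization system this pairing space is exactly $\colim_{(j(y)\to x)} A(y)$, because $\E$-arrows out of $x$ biject (coherently, as a space) with arrows $x \to j(y)$ that are "generic", which after the finality reduction is the indexing for the Kan extension colimit. So I would: (1) recall the fibrational description of $\Arr^\E(\C)^{\cart}$ and its two legs; (2) compute the fiber of $\dom \circ \mathrm{pr}$ over $x$ as the stated pairing space; (3) invoke finality of the $\E$-factorization to match this with the pointwise left Kan extension formula; (4) conclude the classified presheaf is $j\lowershriek A$ by uniqueness of the fibration classifying a given presheaf.

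The main obstacle will be step (3), making the finality argument fully rigorous in the $\infty$-categorical setting: one must show that the inclusion of the full subcategory of $(j\downarrow x)$ on those arrows $j(y)\to x$ which are \emph{inert} (i.e.\ lie in $\F$) is final, equivalently that for each object $w \to x$ of $(j\downarrow x)$ the category of factorizations through an $\F$-arrow is weakly contractible. This is precisely where the orthogonality of $(\E,\F)$ is used: the $\E$–$\F$ factorization of $w \to x$ is unique up to contractible choice, so that category has an initial object and is contractible. A secondary subtlety is checking that the various fibrations in the square are straightened compatibly — i.e.\ that forming the pullback of right fibrations corresponds to the pullback (in presheaves) of the classified functors, which is standard (right fibrations are stable under pullback and straightening preserves limits) but should be cited. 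I would also double-check the orientation issue that $\F$, not $\E$, is closed under the composites appearing in factorizations, so that the middle object of each factorization genuinely lies in the domain of $A$; this is automatic since in a factorization system the right class contains all isomorphisms and is closed under composition, and the middle object is the codomain of the $\E$-part, which is also the domain of the $\F$-part, hence an object of $\F$ viewed as a full subcategory of $\C$. Once finality is in hand, the rest is bookkeeping with pullbacks of right fibrations.
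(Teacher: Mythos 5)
Your step (2) is fine: the fiber of the left-hand composite $j\lowershriek(\widetilde A)\to \Arr^{\E}(\C)^{\cart}\xrightarrow{\dom}\C$ over $x$ is indeed the space of pairs (an $\E$-arrow $e\colon x\to y$, a point of $A(y)$), since the fiber of $\dom$ over $x$ is the space of $\E$-arrows out of $x$ and $\codom$ records the target. The gap is in step (3), and it is not just a bookkeeping issue: the pointwise formula you start from has the wrong variance. For a presheaf $A\colon \F\op\to\spaces$, left Kan extension along $j$ is computed at $x$ by a colimit indexed by arrows $x\to j(y)$ (the opposite of the comma category $x\downarrow j$), not by arrows $j(y)\to x$; the formula you wrote is the one for covariant functors. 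Correspondingly, the final subcategory is that of the \emph{$\E$-arrows out of $x$}, not the ``$\F$-arrows into $x$'': a general arrow $\alpha\colon x\to j(y)$ is connected to its $\E$-part by the $\F$-component of its $(\E,\F)$-factorization, which \emph{is} a morphism of this comma category, and contractibility of the factorization space then gives finality --- and this is exactly what matches your fiber description and the formula of Corollary~\ref{j_!A}. In your comma category $(j\downarrow x)$ the morphisms are $\F$-maps, but the factorization of an object $j(y)\to x$ relates it to its $\F$-part via its $\E$-component, which is not a morphism of $(j\downarrow x)$; so the claimed finality fails. Concretely, in $\simplexcategory$ with $x=[0]$, the object $s^0\colon[1]\to[0]$ of $(j\downarrow[0])$ admits no morphism to any inert arrow into $[0]$ (that would require an inert map $[1]\to[0]$), so the category you need to be weakly contractible is empty. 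One can also see the formula itself cannot be right: for the terminal presheaf your indexing category over $x=[1]$ is connected, whereas $j\lowershriek(1)_1=(B\N)_1=\N$ by Lemma~\ref{lemma:BN}.

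If you correct the variance, your route (fiber computation, finality of $\E$-arrows out of $x$ via uniqueness of factorization, plus the unstraightening compatibilities you flag) does yield a valid, more computational proof. For comparison, the paper's argument is purely formal and avoids pointwise colimits: $\codom$ has a right adjoint $s$ sending $x$ to $\id_x$, hence $\codom\upperstar\simeq s\lowershriek$ on right fibrations; since $\dom\circ s=j$, one gets $j\lowershriek=\dom\lowershriek\circ s\lowershriek=\dom\lowershriek\circ\codom\upperstar$, which is precisely ``pull back along $\codom$, then compose with $\dom$'' (the composite being already a right fibration because $\dom$ is one).
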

\begin{proof}
The codomain functor $\codom$ admits a right adjoint $s$ 
(which is also a right inverse): $s$ sends an object $x$ to the 
identity arrow on $x$.
Since $\codom$ is a left adjoint of $s$, we have $\codom\!\upperstar 
= s\lowershriek$ as functors $\RFib(\F) \to 
\RFib(\Arr^{\E}(\C)^{\cart})$.
Further, $\dom \circ s = j$, hence $j\lowershriek = \dom\lowershriek 
\circ s\lowershriek = \dom\lowershriek \circ \codom\!\upperstar$. 
\end{proof}

Instantiating at the active-inert factorization system from \ref{active-inert}, we obtain the following.

\begin{cor}\label{j! as pbk}
    For a presheaf $A\colon \Deltainert\op \to \spaces$ with corresponding right fibration
  $\widetilde A \to \Deltainert$, the left Kan extension along $j \colon 
  \Deltainert \to \simplexcategory$ corresponds to the 
  left-hand composite in the pullback diagram
  $$\begin{tikzcd}
j\lowershriek(\widetilde A) \drpullback \ar[r] \ar[d] &  \widetilde A \ar[d] \\
\Arr^{\operatorname{act}}(\simplexcategory)^{\cart} 
\ar[d, "\dom"'] \ar[r, "\codom"'] & 
\Deltainert \\
\simplexcategory
\end{tikzcd}
$$
\end{cor}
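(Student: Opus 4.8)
The plan is to recognize this as nothing more than Lemma~\ref{lem fact system act} applied verbatim to the active-inert factorization system $(\simplexcategory_{\mathrm{act}}, \Deltainert)$ on $\C = \simplexcategory$, described in \ref{active-inert}. So the main task is a bookkeeping one: match up the abstract data $(\E, \F)$ with the concrete data, and check that the hypotheses of the lemma are genuinely satisfied in this instance.

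First I would record that the active-inert factorization is an orthogonal factorization system on $\simplexcategory$, as stated in \ref{active-inert}, so we may legitimately instantiate Lemma~\ref{lem fact system act} with $\E = \simplexcategory_{\mathrm{act}}$ (the active maps) and $\F = \Deltainert$ (the inert maps). Under this identification the inclusion functor called $j\colon \F \to \C$ in the lemma is exactly the inclusion $j\colon \Deltainert \to \simplexcategory$ fixed throughout \cref{ss lke}, so "left Kan extension along $j$" means the same thing in both places. Next I would substitute into the three pieces of the lemma's pullback square: $\Arr^{\E}(\C)^{\cart}$ becomes $\Arr^{\operatorname{act}}(\simplexcategory)^{\cart}$, the right fibration $\widetilde{A} \to \F$ associated to a presheaf $A\colon \F\op \to \spaces$ becomes the right fibration $\widetilde{A} \to \Deltainert$ associated to $A\colon \Deltainert\op \to \spaces$, and the functors $\dom$ and $\codom$ retain their meaning ($\dom$ landing in $\simplexcategory$, $\codom$ landing in $\Deltainert$). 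The resulting diagram is precisely the one displayed in the statement, and Lemma~\ref{lem fact system act} identifies $j\lowershriek(\widetilde A)$, as a right fibration over $\simplexcategory$, with the left-hand composite $\dom \circ (\text{top-left leg})$.

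The one point that deserves a sentence of justification — and the only place where anything could go wrong — is that $\Arr^{\operatorname{act}}(\simplexcategory)^{\cart} \to \simplexcategory$ really is the right fibration appearing in the lemma. This is where I would invoke the preceding discussion (Factorization systems and cartesian fibrations, citing \cite[Lemma~1.3]{GKT3}): for the factorization system $(\E,\F)$, the full subcategory $\Arr^{\E}(\C) \subseteq \Arr(\C)$ is a cartesian fibration over $\C$ via $\dom$, with cartesian arrows exactly those whose codomain component lies in $\F$; restricting to these gives the right fibration $\Arr^{\E}(\C)^{\cart} \to \C$. Specialized to the active-inert system this says $\Arr^{\operatorname{act}}(\simplexcategory)^{\cart} \to \simplexcategory$ is a right fibration whose fiber over $[k]$ is the space (here: set) of active maps out of $[k]$ — consistent with \ref{Act=N^k}. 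Since Lemma~\ref{lem fact system act} has already done all the real work, this corollary is obtained purely by specialization, and I do not expect any genuine obstacle; the proof is one or two lines.
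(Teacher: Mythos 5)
Your proposal is correct and is essentially the paper's argument: the corollary is obtained by instantiating Lemma~\ref{lem fact system act} at the active-inert factorization system of \ref{active-inert}, with $\E$ the active and $\F$ the inert maps, the identification of $\Arr^{\operatorname{act}}(\simplexcategory)^{\cart}\to\simplexcategory$ as the relevant right fibration coming from the preceding discussion of factorization systems and cartesian fibrations. The paper treats this as immediate (no separate proof is given beyond the phrase ``instantiating at the active-inert factorization system''), so your extra sentence of justification is harmless but not a deviation.
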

Explicitly, $\Arr^{\operatorname{act}}(\simplexcategory)^{\cart} $ is the 
category whose objects are the active maps in $\simplexcategory$ and 
whose
arrows from $\alpha'$ to $\alpha$ are commutative squares
\[
\begin{tikzcd}
{[k']} \ar[d, -act, "\alpha'"'] \ar[r] & {[k]} \ar[d, -act, "\alpha"]  \\
{\phantom{,}[n']\phantom{,}} \ar[r, rightarrowtail] & \phantom{,}{[n]}.
\end{tikzcd}
\]

The corollary gives the following explicit sum-over-active-maps 
formula for $j\lowershriek 
(A)$:

\begin{cor}\label{j_!A}
  The simplicial space $X = j\lowershriek(A)$ has
$$
X_k = \sum_{[k]\actto [n]} A_n .
$$
\end{cor}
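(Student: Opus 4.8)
The plan is to read the formula off the pullback square of \cref{j! as pbk} by computing fibers over a fixed object $[k]\in\simplexcategory$. Under the standard equivalence between right fibrations over $\simplexcategory$ and presheaves $\simplexcategory\op\to\spaces$ --- in which the value of a presheaf at an object is the fiber of the associated right fibration over that object --- the presheaf $X=j\lowershriek(A)$ is precisely the one classified by the right fibration $j\lowershriek(\widetilde A)\to\simplexcategory$ of \cref{j! as pbk}. So $X_k$ is the fiber of $j\lowershriek(\widetilde A)$ over $[k]$, and the whole task is to compute this fiber.

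First I would use that the structure map $j\lowershriek(\widetilde A)\to\simplexcategory$ is $\dom$ out of $\Arr^{\operatorname{act}}(\simplexcategory)^{\cart}$ and that formation of the fiber over an object of the base commutes with the pullback defining $j\lowershriek(\widetilde A)$. This reduces the fiber over $[k]$ to the pullback of $\widetilde A\to\Deltainert$ along the restriction of $\codom$ to the fiber $(\Arr^{\operatorname{act}}(\simplexcategory)^{\cart})_{[k]}$ of $\dom$ over $[k]$. Next I would identify that fiber: since $\Arr^{\operatorname{act}}(\simplexcategory)^{\cart}\to\simplexcategory$ is by construction the right fibration classifying the presheaf sending $x$ to the space of active maps out of $x$, its fiber over $[k]$ is the groupoid core of the full subcategory of the coslice $[k]/\simplexcategory$ spanned by the active maps. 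Because $\simplexcategory$ is an ordinary $1$-category with no nonidentity isomorphisms, this core is the \emph{discrete} set $\operatorname{Act}(k)$ of active maps with source $[k]$ (as in \cref{Act=N^k}), and the restricted $\codom$ sends $\alpha\colon[k]\actto[n]$ to $[n]$. Pulling back $\widetilde A\to\Deltainert$ --- whose fiber over $[n]$ is $A_n$ --- along this discrete set then yields
$$
X_k \;=\; \coprod_{\alpha\colon[k]\actto[n]} A_n \;=\; \sum_{[k]\actto[n]} A_n,
$$
which is the assertion.

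The step I expect to need the most care is the identification of $(\Arr^{\operatorname{act}}(\simplexcategory)^{\cart})_{[k]}$ as a \emph{discrete} set: one must use both that passing from a cartesian fibration to its subcategory of cartesian arrows replaces each fiber by its groupoid core, and that $\simplexcategory$ being a $1$-category makes that core discrete; and one must check that formation of fibers over an object commutes with the pullback in \cref{j! as pbk}. As a sanity check, or an alternative route that bypasses \cref{j! as pbk}, the same formula drops out of the colimit description of left Kan extension: $(j\lowershriek A)([k])$ is the colimit of $A$ over the comma category $j\op\downarrow[k]$, whose objects are the maps $[k]\to[m]$ in $\simplexcategory$; by uniqueness of active--inert factorizations the full subcategory on the active maps out of $[k]$ is final (for each $[k]\to[m]$ the relevant slice has a terminal object, given by the active--inert factorization), and that subcategory is discrete, so the colimit collapses to the stated coproduct.
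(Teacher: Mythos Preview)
Your argument is correct and follows essentially the same route as the paper's proof: both compute the fiber of the right fibration $j\lowershriek(\widetilde A)\to\simplexcategory$ over $[k]$ by first identifying the fiber of $\Arr^{\operatorname{act}}(\simplexcategory)^{\cart}$ over $[k]$ as the discrete set of active maps $[k]\actto[n]$, and then observing that over each such point the further fiber in $\widetilde A$ is $A_n$. Your write-up is more explicit about why that fiber is discrete and adds the alternative colimit-formula sanity check, but the core idea is the same as the paper's.
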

\begin{proof}
  In the diagram in \ref{j! as pbk}, the fiber over $[k]\in \simplexcategory$ is computed by first 
  expanding the fiber in $\Arr^{\operatorname{act}}(\simplexcategory)^{\cart} $
  over $[k]$, which is the set $\{[k] \actto [n]\}$ (for varying $n$). For each element 
  in this set, the fiber is clearly $A_n$.
\end{proof}

\begin{example}
Let $A$ be the $\Deltainert$-presheaf 
$ \begin{tikzcd}[cramped,sep=small]1 & 0 \ar[l, shift
  left] \ar[l, shift right]& 0\cdots \ar[l, shift
  left] \ar[l, shift right] \end{tikzcd}$.
Then $j\lowershriek(A) = 1$, the terminal simplicial space.
\end{example}

%%%%%%%%%%%%%%%%%%%%%%%%%%%%%%%%%%%%%%%%%%%%%%%%%%
\subsection{Two identifications}
%%%%%%%%%%%%%%%%%%%%%%%%%%%%%%%%%%%%%%%%%%%%%%%%%%

\begin{blanko}{Twisted arrow categories.}
  Recall that
  for $\C$ a small category (we shall only need the construction for 
  $1$-categories), the {\em twisted arrow category}
  $\tw(\C)$ is the category of 
  elements of the Hom functor $\C\op\times \C \to \kat{Set}$.
  It thus 
  has the arrows of $\C$ as objects, and trapezoidal commutative diagrams
\[
\begin{tikzcd}[row sep={13pt,between origins}, column sep={32pt,between origins}]
	&\cdot\\
	\cdot \ar[ur] \\
	& \\
	\cdot \ar[uu, "f'"]\\
	&\cdot \ar[ul] \ar[uuuu, "f"']
\end{tikzcd}
\]
as morphisms from $f'$ to $f$. (Fancier viewpoints in the 
$\infty$-setting play a key role in \cite{HK-untwist}, but in 
this paper only the naive viewpoint is needed.)
\end{blanko}

\begin{lemma}\label{lemma twist BN}
  There is a natural equivalence of categories
  $$
  \Deltainert \simeq \tw(B\N) .
  $$
\end{lemma}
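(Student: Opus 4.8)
The plan is to construct an explicit functor between the two categories and check it is an equivalence directly, rather than appeal to abstract machinery. First I would unwind the target. Since $B\N$ is the one-object category (monoid) on the natural numbers under addition, an object of $\tw(B\N)$ is just an arrow in $B\N$, i.e.\ an element $n \in \N$; so objects of $\tw(B\N)$ are in bijection with objects $[n]$ of $\Deltainert$ (a single object $[n]$ for each $n$, matching the presentation \eqref{eq deltainert}). A morphism in $\tw(B\N)$ from $n'$ to $n$ is a pair $(a,b) \in \N \times \N$ with $a + n' + b = n$ (the two legs of the trapezoid, pre- and post-composition), composition being addition of the left parts and addition of the right parts. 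So the first step is to observe that the data of such a pair $(a,b)$ is exactly the data of an inert map $[n'] \rightarrowtail [n]$: an inert (distance-preserving) map is determined by where it sends $0$, which may be any of $0,1,\dots,n-n'$, and $a := f(0)$, $b := n - n' - a = n - f(n')$ recovers the pair. This is nothing but the observation, implicit in \ref{active-inert}, that the inert maps into $[n]$ are generated by the outer cofaces $d^\bot$ (adding to the bottom, incrementing $a$) and $d^\top$ (adding to the top, incrementing $b$), which also explains why the relation $d^\top d^\bot = d^\bot d^\top$ in \eqref{eq deltainert} matches the commutativity of addition in the two coordinates.

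Next I would define the functor $\Phi \colon \Deltainert \to \tw(B\N)$ on objects by $[n] \mapsto n$ and on an inert map $f \colon [m] \rightarrowtail [n]$ by $f \mapsto (f(0),\, n - f(m))$, and check functoriality: given $[m] \xrightarrow{f} [n] \xrightarrow{g} [p]$ with $f,g$ inert, the composite $gf$ is inert with $(gf)(0) = g(f(0)) = g(0) + f(0)$ and $p - (gf)(m) = (p - g(n)) + (n - f(m))$, using that $g$ is distance-preserving; this is precisely the composition law $(a,b)\cdot(a',b') = (a+a',\, b+b')$ in $\tw(B\N)$, and identities obviously go to $(0,0)$. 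That $\Phi$ is essentially surjective is immediate since it is bijective on objects. For full faithfulness, the map $\Hom_{\Deltainert}([m],[n]) \to \Hom_{\tw(B\N)}(m,n)$ sends $f$ to $(f(0), n - f(m))$; injectivity holds because an inert map is determined by $f(0)$ (hence by $a$), and surjectivity onto $\{(a,b) : a + m + b = n\}$ holds because for any such pair the assignment $i \mapsto a + i$ defines an inert map $[m] \rightarrowtail [n]$ realizing it.

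I would close by noting naturality: this is really the statement that the equivalence is canonical, arising because $B\N$ is by construction the decomposition space with $(B\N)_k = \N^k$, so that $\tw(B\N) = \el(\Hom_{B\N})$ and, unwinding, $\Hom_{B\N}(-, -)$ on the single object pair is the addition operation $\N \times \N \to \N$ whose category of elements is exactly the ``staircase'' category $\Deltainert$; no choices are involved. I expect the only mild subtlety to be bookkeeping the variance and the direction of the two legs of the trapezoid correctly (which leg is pre-composition and which is post-composition, and correspondingly whether $d^\bot$ or $d^\top$ increments the first coordinate) — getting that consistent with the convention in the twisted-arrow picture is the one place an off-by-one or a transposition could sneak in, but it is not a conceptual obstacle. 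Everything else is a direct, finite verification on $1$-categories, and no $\infty$-categorical input is needed here, consistent with the remark that ``only the naive viewpoint is needed.''
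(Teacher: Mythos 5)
Your proof is correct and follows essentially the same route as the paper: identify the objects of $\tw(B\N)$ with $\N$, and identify a trapezoid with legs $(a,b)$ out of $m$ with the inert map $(d^\top)^b(d^\bot)^a\colon[m]\rightarrowtail[a{+}m{+}b]$, i.e.\ $i\mapsto a+i$. The paper states this identification as a brief calculation, while you additionally spell out the functoriality and fully-faithfulness checks; the content is the same.
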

\begin{proof}
  The category $B\N$ has only one object, and its set of arrows is $\N$.
  Therefore, the object set of $\tw(B\N)$ is $\N$, just as for $\Deltainert$.
A general map in $\tw(B\N)$ is of the form
\[
\begin{tikzcd}[row sep={13pt,between origins}, column sep={32pt,between origins}]
	&\cdot\\
	\cdot \ar[ur, "b"] \\
	& \\
	\cdot \ar[uu, "m"]\\
	&\cdot \ar[ul, "a"] \ar[uuuu, "a+m+b"']
\end{tikzcd}
\]
and it corresponds precisely to
$$
(d^\top)^{b} \circ (d^\bot)^a \  \colon [m] \longrightarrow [a{+}m{+}b]   
$$
in $\Deltainert$.
\end{proof}

This result should be well known, but we are not aware of a reference
for it. It should also be mentioned that it is also closely related to a
recent fancier result (Hoang~\cite{Hoang:2005.01198} and
Burkin~\cite{Burkin:Twisted}) stating that $\simplexcategory$ itself is the
twisted arrow category (in a certain generalized sense)
of the operad for unital associative algebras.

In the next lemma, and later in the paper, we make reference to the category of elements of a simplicial set or simplicial space $X$.
Recall that $X \colon \simplexcategory\op \to \spaces$ has an associated right fibration over $\simplexcategory$, and the category of elements is the domain of this right fibration $\el(X) \to \simplexcategory$.
The objects of $\el(X)$ are simplices $\Delta^n \to X$.

\begin{lemma}\label{lem:ArractDeltacart=elBN}
  There is a canonical identification 
  \[
\begin{tikzcd}[column sep={3.2em,between origins}]
\Arr^{\operatorname{act}}(\simplexcategory)^{\cart}
\ar[rd, "\dom"']& \quad\simeq & \el (B \N) \ar[ld]  \\
 & \simplexcategory &
\end{tikzcd}
\]
of right fibrations over $\simplexcategory$.
\end{lemma}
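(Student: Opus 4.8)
The plan is to construct an explicit functor $\el(B\N) \to \Arr^{\operatorname{act}}(\simplexcategory)^{\cart}$ over $\simplexcategory$, check it lands in the right place, and verify it is an equivalence by inspecting objects and morphisms (and then, being a map of right fibrations over $\simplexcategory$, it suffices to check it is a fibrewise equivalence). First I would unwind the two sides. An object of $\el(B\N)$ is a simplex $\Delta^k \to B\N$, i.e.\ a functor $[k] \to B\N$, which is the same as a composable string of $k$ arrows in $B\N$, i.e.\ a $k$-tuple $(n_1,\dots,n_k) \in \N^k$; and by \ref{Act=N^k} this is exactly an active map $\alpha\colon [k] \actto [n]$ with $n = \sum_i n_i$, which is precisely an object of $\Arr^{\operatorname{act}}(\simplexcategory)^{\cart}$ lying over $[k]$. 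This gives the bijection on objects, compatibly with the projections to $\simplexcategory$ (both record $[k]$).

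Next I would handle morphisms. A morphism in $\el(B\N)$ from $(\Delta^{k'} \to B\N)$ to $(\Delta^k \to B\N)$ is a map $\phi\colon [k'] \to [k]$ in $\simplexcategory$ together with a commuting triangle over $B\N$; concretely, writing the source string as $(m_1,\dots,m_{k'})$ and the target string as $(n_1,\dots,n_k)$, the compatibility says each $m_i$ is the composite (sum) of the $n_j$ for $j$ in the $i$-th interval cut out by $\phi$ — i.e.\ $m_i = \sum_{\phi(i-1) < j \le \phi(i)} n_j$. On the other side, a morphism in $\Arr^{\operatorname{act}}(\simplexcategory)^{\cart}$ over $\phi$ is a commuting square with top $\phi\colon[k']\to[k]$, left leg $\alpha'$, right leg $\alpha$, and bottom an \emph{inert} map $[n']\rightarrowtail[n]$; I would observe that specifying such a square is equivalent to specifying $\phi$ plus the requirement that $\alpha' = $ (the active part of) $\alpha\circ\phi$, which by the description of $\gamma^\alpha_i$ in \ref{gamma} translates into exactly the same numerical condition $m_i = \sum_{\phi(i-1)<j\le\phi(i)} n_j$, with the inert bottom map being the induced one on the relevant subinterval. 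So the two Hom-sets are canonically identified, compatibly with composition (both composites are governed by composing the underlying maps in $\simplexcategory$ and summing), giving an isomorphism of categories over $\simplexcategory$.

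Finally I would note that since $\Arr^{\operatorname{act}}(\simplexcategory)^{\cart}\to\simplexcategory$ is a right fibration (established in the preamble, \cite[Lemma 1.3]{GKT3}) and $\el(B\N)\to\simplexcategory$ is a right fibration by definition, and the functor just constructed commutes with these projections and is a bijection on objects and Hom-sets, it is an equivalence — in fact an isomorphism — of right fibrations over $\simplexcategory$, as claimed.

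I expect the main obstacle to be the morphism-matching step: keeping the bookkeeping straight between a morphism-as-commuting-square (with its inert bottom leg) on the $\Arr^{\operatorname{act}}$ side and a morphism-as-triangle-over-$B\N$ on the $\el(B\N)$ side, and in particular verifying carefully that the inert map appearing as the bottom of the square is forced and unique given $\phi$ — this is really where the $\cart$ decoration (equivalently, the ``codomain leg is in $\F$'' condition) does its work, and where one must use the uniqueness of active-inert factorizations via $\gamma^\alpha_i$ from \ref{gamma}. The object-level bijection and the compatibility with the projections to $\simplexcategory$ are essentially bookkeeping once \ref{Act=N^k} is in hand.
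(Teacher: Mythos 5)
Your proposal is correct and follows essentially the same route as the paper: identify objects fibrewise over $[k]$ via the bijection of \ref{Act=N^k}, then match morphisms using uniqueness of active-inert factorization, with functoriality forced because both sides are discrete (right) fibrations over $\simplexcategory$. The only cosmetic difference is that the paper organizes the morphism check by comparing cartesian lifts of active and inert maps separately, whereas you treat a general map $\phi$ at once via the formula $m_i=\sum_{\phi(i-1)<j\le\phi(i)}n_j$; both amount to the same routine verification.
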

\begin{proof}
  The objects of $\Arr^{\operatorname{act}}(\simplexcategory)^{\cart}$
  in the fiber over $[k]\in \simplexcategory$
  are the active maps $[k]\actto [n]$, whereas the objects in $\el (B \N)$
  over $[k]$
  are $k$-tuples $(n_1,\ldots,n_k)$ of natural numbers. The bijection between
  these sets was already described in \ref{Act=N^k}.
  
  Functoriality amounts to matching up cartesian lifts
  for the two fibrations. For active maps in $\simplexcategory$, the lifts in 
  $\Arr^{\operatorname{act}}(\simplexcategory)^{\cart}$ are given by composition, and
  the lifts in $\el(B\N)$ are given by 
  addition of natural numbers. For inert maps in $\simplexcategory$, the lifts in
  $\Arr^{\operatorname{act}}(\simplexcategory)^{\cart}$ are given by 
  active-inert factorization, and the cartesian lifts in $\el(B\N)$ are given by 
  projections. The checks are routine.
\end{proof}

%%%%%%%%%%%%%%%%%%%%%%%%%%%%%%%%%%%%%%%%%%%%%%%%%%
\subsection{\texorpdfstring{$j\lowershriek$}{j!} gives decomposition spaces 
and CULF maps}
%%%%%%%%%%%%%%%%%%%%%%%%%%%%%%%%%%%%%%%%%%%%%%%%%%

\begin{lemma}\label{lemma:BN}
  For the terminal $\Deltainert\op$-diagram $1$ we have
  $$
  j\lowershriek(1) = B\N
  $$
  (nerve of the one-object category $\N$).
\end{lemma}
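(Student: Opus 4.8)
The plan is to apply \cref{j_!A} directly with $A = 1$ the terminal presheaf, so that $j\lowershriek(1)_k = \sum_{[k]\actto[n]} 1 = \operatorname{Act}(k)$, the \emph{set} of active maps out of $[k]$ (a discrete space). By the bijection $\operatorname{Act}(k) \cong \N^k$ of \ref{Act=N^k}, this matches the $k$-simplices of the nerve of the one-object category $\N$, since $(\nerve B\N)_k = \N^k$. So the content of the lemma is that these identifications are compatible with the simplicial structure maps, i.e.\ that the bijections assemble into an isomorphism of simplicial sets.

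First I would make the cosimplicial identification precise using the two computations of the simplicial structure on $j\lowershriek(1)$. On the $j\lowershriek$ side, a simplicial operator $\phi\colon [k'] \to [k]$ in $\simplexcategory$ acts on $\operatorname{Act}(k)$ by: given $\alpha\colon [k]\actto[n]$, take the active-inert factorization of $\alpha\circ\phi$, namely $[k'] \actto [n'] \rightarrowtail [n]$, and return the active part $[k']\actto[n']$. This is exactly the description of the right fibration $\Arr^{\operatorname{act}}(\simplexcategory)^{\cart} \to \simplexcategory$ unwound in \cref{j! as pbk}. On the nerve side, $\phi$ acts on $\N^k$ by the usual nerve functoriality. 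The claim is that under $\operatorname{Act}(k)\cong\N^k$ these agree — and in fact this compatibility is precisely what \cref{lem:ArractDeltacart=elBN} already establishes: it identifies $\Arr^{\operatorname{act}}(\simplexcategory)^{\cart}$ with $\el(B\N)$ as right fibrations over $\simplexcategory$, and the right fibration associated to the simplicial space $\nerve B\N$ is by definition $\el(B\N)\to\simplexcategory$.

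Thus the cleanest route is: by \cref{j! as pbk} with $A = 1$ (so $\widetilde A = \Deltainert$ and the pullback square degenerates, giving $j\lowershriek(1)$ corresponding to the right fibration $\Arr^{\operatorname{act}}(\simplexcategory)^{\cart} \to \simplexcategory$ directly), combined with \cref{lem:ArractDeltacart=elBN} identifying this right fibration with $\el(B\N)\to\simplexcategory$, and finally the standard fact that a simplicial space is recovered from its right fibration (category of elements), we conclude $j\lowershriek(1) \simeq B\N$. I would spell out the first step: when $A=1$, the top-right corner $\widetilde A$ of the square in \cref{j! as pbk} is $\Deltainert$ itself (the terminal presheaf has total space the base), $\codom$ is the right-hand vertical-then-horizontal composite, and the pullback of $\codom$ along the identity-like map is just $\Arr^{\operatorname{act}}(\simplexcategory)^{\cart}$ with its $\dom$ projection — so $j\lowershriek(1)$ is the simplicial space classified by $\dom\colon \Arr^{\operatorname{act}}(\simplexcategory)^{\cart} \to \simplexcategory$.

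\textbf{Expected obstacle.} There is essentially no obstacle: all the work has been front-loaded into \cref{lem:ArractDeltacart=elBN} and \ref{Act=N^k}, whose proofs were called ``routine.'' The only mild care needed is bookkeeping — being precise that ``the right fibration associated to $X$'' applied to $X = \nerve B\N$ gives $\el(B\N)$, and that passing back from right fibrations to simplicial spaces is an equivalence, so matching right fibrations over $\simplexcategory$ suffices to match the simplicial spaces. This is standard (e.g.\ the right-fibration/presheaf correspondence cited later in the excerpt), so the proof should be just a couple of lines chaining \cref{j_!A} (or \cref{j! as pbk}) with \cref{lem:ArractDeltacart=elBN}.
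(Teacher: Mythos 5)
Your proposal is correct and follows essentially the same route as the paper's own proof: specialize the pullback description of $j\lowershriek$ (\cref{j! as pbk}, i.e.\ \cref{lem fact system act}) to $A=1$, so that $j\lowershriek(1)$ is classified by $\dom\colon \Arr^{\operatorname{act}}(\simplexcategory)^{\cart}\to\simplexcategory$, and then invoke \cref{lem:ArractDeltacart=elBN} to identify this right fibration with $\el(B\N)\to\simplexcategory$. The direct computation via \cref{j_!A} that you sketch first is also mentioned in the paper, but only as a secondary alternative, with the same remaining bookkeeping of face and degeneracy maps that you note.
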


\begin{proof}
Under the 
basic equivalence
$\PrSh(\simplexcategory) \simeq \rfib(\simplexcategory)$ between presheaves and 
right fibrations, 
Lemma~\ref{lem fact system act} 
  gives us that $j\lowershriek(1)$ is 
  $\Arr^{\operatorname{act}}(\simplexcategory)^{\cart} \to 
  \simplexcategory$, and by Lemma~\ref{lem:ArractDeltacart=elBN} this 
  is equivalent to $\el( B\N) \to \simplexcategory$, as asserted.
  The result can also be proved by a direct calculation using
  Lemma~\ref{j_!A}:
In simplicial degree $k$ we have 
  $$
  j\lowershriek(1)_k = 
  \sum_{n\in \N} \Hom_{\Deltaact}([k],[n]) \cong \N^k, 
  $$
where the isomorphism sends $f\colon [k] \actto [n]$ to $(f(i) - f(i-1))_{1 \leq i \leq k}$.
  It then remains to describe the face and degeneracy maps.
\end{proof}

\begin{prop}\label{prop:culf}
If $A \to B$ is a map of $\Deltainert\op$-diagrams, then $j\lowershriek(A) 
\to j\lowershriek(B)$ is CULF.
\end{prop}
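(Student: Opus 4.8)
The plan is to check the CULF condition directly using the explicit sum-over-active-maps description of $j\lowershriek$ from Corollary~\ref{j_!A}, namely $j\lowershriek(A)_k = \sum_{[k]\actto[n]} A_n$, together with the naturality of this description in $A$. Concretely, the CULF condition asks that for every active map $\varphi\colon [k]\actto[\ell]$ in $\simplexcategory$, the naturality square
$$
\begin{tikzcd}
j\lowershriek(A)_\ell \ar[r]\ar[d] & j\lowershriek(B)_\ell \ar[d] \\
j\lowershriek(A)_k \ar[r] & j\lowershriek(B)_k
\end{tikzcd}
$$
is a pullback, where the vertical maps are $\varphi^\upperstar$. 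So first I would describe how $\varphi^\upperstar\colon j\lowershriek(C)_\ell \to j\lowershriek(C)_k$ acts on the sum $\sum_{[\ell]\actto[m]} C_m$ for a general presheaf $C$: an element is a pair $(\psi\colon[\ell]\actto[m],\ x\in C_m)$, and precomposing the active map $\psi$ with $\varphi$ gives an active map $\psi\varphi\colon[k]\actto[m]$, so $\varphi^\upperstar(\psi,x) = (\psi\varphi,\ x)$ lands in the summand indexed by $\psi\varphi$. The crucial point is that the $C_m$-coordinate is untouched — the active map $\varphi$ acts purely on the indexing set of active maps, not on the values of $C$. This is exactly the manifestation, at the level of the formula, of the fact proved in Lemma~\ref{lem fact system act}: $j\lowershriek(\widetilde C)$ is a pullback of $\widetilde C$ along $\codom\colon\Arr^{\operatorname{act}}(\simplexcategory)^{\cart}\to\Deltainert$, and the $\simplexcategory$-action on the base $\Arr^{\operatorname{act}}(\simplexcategory)^{\cart}$ restricted to active maps is by composition, which does not move the codomain.

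Given this, the proof is a diagram chase. Let $f\colon A\to B$ be the map of presheaves. Over a fixed active map $\psi\colon[\ell]\actto[m]$, the map $j\lowershriek(A)_\ell\to j\lowershriek(B)_\ell$ restricts on the $\psi$-summand to $f_m\colon A_m\to B_m$. Now fix an element of $j\lowershriek(B)_\ell$ in the $\psi$-summand, say $b\in B_m$, and an element of $j\lowershriek(A)_k$ in some summand indexed by $\chi\colon[k]\actto[m']$, say $a\in A_{m'}$; the compatibility condition for the pullback says $f_{m'}(a)$ equals $\varphi^\upperstar(b) = (\psi\varphi, b)\in j\lowershriek(B)_k$. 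For these to be comparable we need $\chi = \psi\varphi$ (hence $m' = m$) and $f_m(a) = b$. Thus the fiber of the comparison map $j\lowershriek(A)_\ell \to j\lowershriek(A)_k \times_{j\lowershriek(B)_k} j\lowershriek(B)_\ell$ over $(a, b, \ldots)$ is precisely the fiber of $f_m\colon A_m\to B_m$ over $b$, matched with the identity summand condition, and one sees the comparison map is an equivalence summand-by-summand. Cleanly: the naturality square above decomposes as a coproduct, indexed by active maps $\psi\colon[\ell]\actto[m]$ and $\varphi$, of copies of the square $A_m\to B_m$ with itself as top edge and identities as verticals — wait, more precisely, the left vertical $\varphi^\upperstar$ reindexes summands injectively (precomposition with a fixed $\varphi$ is injective on active maps out of $[\ell]$ into a fixed $[m]$, being a quotient-type map — actually one should just note it is a well-defined reindexing), so pasting is a pullback because each summand contributes the trivially-pullback square in which the two horizontals are both $f_m$ and the two verticals are identities.

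The cleanest route, and the one I would actually write, is to avoid the elementwise bookkeeping entirely: invoke Corollary~\ref{j! as pbk}, which exhibits $j\lowershriek(\widetilde A)$ and $j\lowershriek(\widetilde B)$ as pullbacks of $\widetilde A\to\Deltainert$ and $\widetilde B\to\Deltainert$ along the same map $\codom\colon\Arr^{\operatorname{act}}(\simplexcategory)^{\cart}\to\Deltainert$. A map $A\to B$ of presheaves gives a map $\widetilde A\to\widetilde B$ of right fibrations over $\Deltainert$; pulling back along $\codom$ gives the map $j\lowershriek(\widetilde A)\to j\lowershriek(\widetilde B)$ over $\Arr^{\operatorname{act}}(\simplexcategory)^{\cart}$, which after composing with $\dom$ is the map over $\simplexcategory$ we care about. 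Now the active maps of $\simplexcategory$ lift, via $\dom$, to cartesian arrows of $\Arr^{\operatorname{act}}(\simplexcategory)^{\cart}$ (this is the content of Lemma~\ref{lem:ArractDeltacart=elBN}: these are the cartesian lifts in $\el(B\N)$, given by composition of active maps). Along any cartesian arrow of the base right fibration, pullback of right fibrations produces cartesian (hence pullback) naturality squares. So the map $j\lowershriek(A)\to j\lowershriek(B)$, being a pullback of $\widetilde A\to\widetilde B$ along $\codom$ and then transported along $\dom$, has pullback naturality squares over all active maps — i.e.\ it is CULF. I expect the main obstacle to be purely expository: making precise, with the right lemma citations, that "$\dom$ sends active maps to cartesian arrows" and that "pulling back a map of right fibrations along a functor, the naturality squares over arrows that become cartesian downstairs are pullbacks"; both are standard facts about right fibrations / the Grothendieck construction, and the first is essentially already recorded in Lemma~\ref{lem:ArractDeltacart=elBN} and the paragraph preceding Lemma~\ref{lem fact system act}.
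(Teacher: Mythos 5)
Your first, summand-by-summand argument based on Corollary~\ref{j_!A} is correct and is essentially the paper's own proof: over an active map $\varphi$ both vertical maps act by pure reindexing of the active-map index (precomposition with $\varphi$) and by the identity on each summand, so the square is a pullback by disjointness/universality of coproducts; the paper's only real difference is that it first reduces, by the standard argument of \cite[Lemma 4.1]{GKT1}, to the single active map $[1]\actto[k]$, and then compares fibers of the horizontal maps, finding the discrete set $\Hom_{\Deltaact}([k],[n])$, which is independent of $A$. (Your retracted worry about injectivity is rightly discarded: precomposition with a non-surjective active map is not injective, but injectivity is not needed for the coproduct decomposition of the pullback.)

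The route you say you would actually write, however, has a genuine gap as formulated. The principle ``$\dom$ sends active maps to cartesian arrows, and pulling back a map of right fibrations produces pullback naturality squares over arrows that become cartesian'' proves too much: since $\dom\colon \Arr^{\operatorname{act}}(\simplexcategory)^{\cart}\to\simplexcategory$ is a right fibration, \emph{every} arrow upstairs is $\dom$-cartesian, so the same reasoning would give pullback naturality squares over inert maps as well, and that is false in general --- over an outer face map $[0]\to[1]$ the naturality square restricts, on the summand indexed by $[1]\actto[n]$, to the naturality square of $A\to B$ at an inert map $[0]\inertto[n]$, which need not be a pullback. The property that actually does the work is not cartesianness over $\dom$ but the interaction with $\codom$: the lifts of an \emph{active} map are given by composition and are sent by $\codom$ to identities of $\Deltainert$ (this is exactly the remark you make in your first paragraph, that the active action ``does not move the codomain''), so over active maps the induced maps on fibers involve no structure maps of $A$ or $B$, only reindexing; for inert maps the lifts are given by active--inert factorization, $\codom$ of them is a genuine inert map, and no pullback condition is available. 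With that $\codom$-identity statement made the key lemma, your fibrational write-up collapses back to the fiber computation of your first paragraph (and of the paper), so the fix is easy, but as stated the cited principle is not correct.
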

\begin{proof}
  By standard arguments (see \cite[Lemma 4.1]{GKT1}) it is enough to 
  show that for any $k\in \N$, the naturality square 
  \[
  \begin{tikzcd}
  j\lowershriek (A)_k \ar[d] \ar[r, -act] & j\lowershriek (A)_1 \ar[d]  \\
  j\lowershriek (B)_k \ar[r, -act] & j\lowershriek (B)_1 
  \end{tikzcd}
  \]
  is a pullback.
  We establish this by showing that the fibers of the two horizontal 
  maps are equivalent (for every point $x\in j\lowershriek (A)_1$). 
  Let us compute the fiber of the top map.
  By the explicit formula in Corollary~\ref{j_!A}, this map is
  $$
  \sum_{[k]\actto[n]} A_n \longrightarrow \sum_{[1]\actto[n]} A_n 
  $$
  given on the indexing sets by precomposition with the (unique) active 
  map $[1] \actto [k]$ and on the summands by the identity map $A_n 
  \to A_n$. The fiber is thus discrete, given by the finite set 
  $\Hom_{\simplexcategory_{\operatorname{act}}}([k], [n])$ of active 
  maps from $[k]$ to $[n]$. In particular, the fiber does not depend 
  on the point $x\in A_n$, and indeed does not even depend on $A$, only on $n$. 
  It is therefore the same for $B$. 
\end{proof}

\cref{lemma:BN} states that 
$j\lowershriek (1) = B\N$. Since $B\N$ is a M\"obius decomposition space
(in fact even a M\"obius category in the sense of Leroux~\cite{Leroux:1976}),
and since anything CULF over a M\"obius decomposition space is again a 
M\"obius decomposition space~\cite[Prop.~6.5]{GKT2}, it follows from \cref{prop:culf} that:

\begin{cor}\label{j=decomp}
  For any $A\colon \Deltainert\op\to\spaces$, the left Kan extension 
  $j\lowershriek(A) \colon \simplexcategory\op\to\spaces$ is a M\"obius decomposition space, called the \emph{free decomposition space associated to $A$.}
\end{cor}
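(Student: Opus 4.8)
The plan is to deduce the corollary from the two preceding results together with the stability of the Möbius condition under CULF maps. First I would apply \cref{prop:culf} to the unique morphism of $\Deltainert\op$-diagrams $A \to 1$, where $1$ is the terminal presheaf. This yields a CULF map
$$
j\lowershriek(A) \longrightarrow j\lowershriek(1).
$$
By \cref{lemma:BN} the target is $B\N$, so $j\lowershriek(A)$ comes equipped with a CULF map to $B\N$.

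Next I would record that $B\N$ is a Möbius decomposition space. Since $B\N$ is the nerve of the one-object category $\N$ (a strict $1$-category), it is in particular a Segal space, hence a decomposition space by \ref{decomp-def}. For the finiteness conditions of \ref{blanko finiteness}: all the spaces $(B\N)_k \cong \N^k$ are discrete, so $B\N$ is locally discrete; and for a fixed $1$-simplex $n \in \N = (B\N)_1$, the nondegenerate higher simplices with long edge $n$ correspond to the ways of writing $n$ as an ordered sum $n = n_1 + \cdots + n_k$ with each $n_i \geq 1$, of which there are only finitely many (together with the completeness condition, which holds since $B\N$ is the nerve of a category in which the only isomorphism is the identity). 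Thus $B\N$ is a Möbius decomposition space — indeed a Möbius category in the sense of Leroux.

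Finally I would invoke the fact, recalled in \ref{blanko finiteness} and proved in \cite[Prop.~6.5]{GKT2}, that a simplicial space admitting a CULF map to a Möbius decomposition space is itself a Möbius decomposition space (in particular a decomposition space). Applying this to the CULF map $j\lowershriek(A) \to B\N$ constructed above gives the statement. I do not expect any genuine obstacle here: the substantive work is already carried out in \cref{prop:culf} and \cref{lemma:BN}, and the only point requiring an argument is the verification that $B\N$ is Möbius, which is the elementary combinatorial check indicated above.
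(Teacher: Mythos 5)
Your proposal is correct and follows essentially the same route as the paper: apply \cref{prop:culf} to the unique map $A \to 1$, identify $j\lowershriek(1) \simeq B\N$ via \cref{lemma:BN}, and conclude by the fact that anything CULF over a M\"obius decomposition space is again M\"obius (\cite[Prop.~6.5]{GKT2}). The only difference is that you spell out the elementary check that $B\N$ is M\"obius, which the paper simply asserts (citing that it is a M\"obius category in the sense of Leroux).
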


%%%%%%%%%%%%%%%%%%%%%%%%%%%%%%%%%%%%%%%%%%%%%%%%%%
\section{Main theorem}
%%%%%%%%%%%%%%%%%%%%%%%%%%%%%%%%%%%%%%%%%%%%%%%%%%

In this section we prove the main theorem, that
left Kan extension along the inclusion 
$j \colon \Deltainert \to \simplexcategory$
induces an equivalence 
$j\lowershriek \colon \PrSh(\Deltainert) \isopil \kat{Decomp}_{/B\N}$.
We also show that in fact the CULF map to $B\N$ 
is unique if it exists,
and as a result the forgetful functor $\kat{Decomp}_{/B\N} \to \kat{Decomp}$ is fully 
faithful. Combining these results we see that
actually
$j\lowershriek \colon \PrSh(\Deltainert) \to \kat{Decomp}$
itself is fully faithful.

%%%%%%%%%%%%%%%%%%%%%%%%%%%%%%%%%%%%%%%%%%%%%%%%%%
\subsection{Untwisting theorem}
%%%%%%%%%%%%%%%%%%%%%%%%%%%%%%%%%%%%%%%%%%%%%%%%%%

We briefly reproduce the main result of \cite{HK-untwist}.

\begin{blanko}{Edgewise subdivision and twisted arrow categories.}
  The edgewise subdivision $\sd(X)$ of a simplicial space $X\colon 
  \simplexcategory\op\to\spaces$ is given by precomposing with the functor
  $Q\colon \simplexcategory \to \simplexcategory$, $[n] \mapsto [2n+1]$;
  see \cite{HK-untwist}. In particular, $\sd (X)_0 = X_1$ and $\sd (X)_1 = X_3$.
  When $X$ is a decomposition space then $\sd(X)$ is in fact a Segal space 
  (and conversely~\cite{BOORS:Edgewise}); it is then denoted 
  $\tw(X)$. Furthermore, $\sd$ takes CULF maps to right fibrations. When $X$ is 
  the nerve of a category (or Segal space), then $\sd(X)$ is the nerve of the 
  twisted arrow category. 

  There is a natural transformation $\lambda\colon \el \Rightarrow \tw$ from
  the category of elements to the twisted arrow 
  category~\cite{HK-untwist}, given on objects by sending
  $\Delta^n \to X$ (an object in $\el(X)$) to the composite
  $\Delta^1 \actto \Delta^n \to X$ (an object in
  $\tw(X)$).\footnote{The natural transformation $\lambda$ goes back to
  Thomason's {\em Notebook 85}~\cite{Thomason:notebook85}, where it is described in
  the special case of the nerve of a $1$-category.}
\end{blanko}

\begin{theorem}
  [\cite{HK-untwist}]
  \label{thm:untwist}
  For $\DD$ a decomposition space, there is a natural 
equivalence of $\infty$-categories

$$
\kat{Decomp}_{/\DD} \isopil \rfib(\tw\DD) .
$$
In the forward direction, it takes a CULF map $X{\to}\DD$ to $\tw(X) {\to} 
\tw(\DD)$. In the backward direction it is given essentially (modulo some translations 
involving nerves and elements)  by pullback along $\lambda$.
\end{theorem}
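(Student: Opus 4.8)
The plan is to establish the equivalence by a sequence of reductions, ultimately reducing to a known fibrational statement about the edgewise subdivision. The starting observation is that by \cref{lemma twist BN} (and more generally because $\tw$ of a decomposition space is an $\infty$-category), the right-hand side $\rfib(\tw\DD)$ is equivalent to $\PrSh(\tw\DD)$, so it suffices to produce an equivalence $\kat{Decomp}_{/\DD} \simeq \PrSh(\tw\DD)$. First I would analyze the forward functor: a CULF map $X \to \DD$ is sent to $\sd(X) \to \sd(\DD) = \tw(\DD)$, and I would invoke the fact (quoted in the excerpt) that $\sd$ sends CULF maps to right fibrations, so this lands in $\rfib(\tw\DD)$. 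The bulk of the work is to show this functor is an equivalence, and the natural strategy is to exhibit an inverse.

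For the backward direction, the plan is: given a right fibration $P \to \tw(\DD)$, form the pullback of $P$ along the natural transformation $\lambda \colon \el(\DD) \to \tw(\DD)$ to get a right fibration over $\el(\DD)$, hence (composing with $\el(\DD) \to \simplexcategory$) a simplicial space over $\DD$; then one must check this simplicial space is a decomposition space and the map to $\DD$ is CULF. The key points to verify are (i) that $\lambda$ has good enough exactness properties that pulling back along it preserves the relevant structure — this is where the decomposition-space hypothesis on $\DD$ is essential, since $\lambda$ being suitably compatible with active-inert pushouts is exactly what makes the pullback a decomposition space; and (ii) that the two constructions are mutually inverse, which amounts to understanding how $\sd$ and $\lambda$ interact — concretely, that $\sd(X) \simeq \el(\DD) \times_{\tw(\DD)} \sd(X)$-type identities hold for CULF $X \to \DD$, reflecting that a CULF map is determined by its behaviour on active maps and $\lambda$ records precisely the active structure.

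The hard part will be step (i): controlling the interaction between $\lambda$, the edgewise subdivision functor $Q \colon [n] \mapsto [2n+1]$, and the active-inert factorization system well enough to prove both that the backward construction produces genuine decomposition spaces with CULF structure maps and that the round-trips are identities. This requires a careful combinatorial analysis of how $Q$ transports active and inert maps (an active map $[k] \actto [n]$ and an inert map interact with $Q$ in a way that must be matched against the twisted-arrow-category structure), together with a descent-type argument showing that the data of a CULF map $X \to \DD$ can be reconstructed from the right fibration $\tw(X) \to \tw(\DD)$. As the excerpt itself notes, this is precisely why the proof is long enough to warrant a separate paper~\cite{HK-untwist}; here I would only set up the two functors, state the compatibility of $\lambda$ with the factorization system as the main lemma, and defer the verification that the composites are equivalent to that reference.

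I should note that in this paper we will only need the special case $\DD = B\N$, where by \cref{lemma twist BN} the right-hand side becomes $\rfib(\Deltainert) \simeq \PrSh(\Deltainert)$, and the remaining task (carried out in the next subsections) is to identify the resulting equivalence with $j\lowershriek$; so for present purposes it is enough to have the statement of \cref{thm:untwist} available as a black box from~\cite{HK-untwist}.
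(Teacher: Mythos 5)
Your proposal matches the paper's treatment: the paper does not prove Theorem~\ref{thm:untwist} here at all, but quotes it from \cite{HK-untwist}, and your description of the forward functor (applying $\tw$ to a CULF map, using that $\sd$ sends CULF maps to right fibrations) and of the backward functor (pullback along $\lambda\colon \el(\DD)\to\tw(\DD)$, then checking the resulting map of simplicial spaces is CULF) coincides with the paper's own sketch following the theorem statement. Deferring the substantive verifications to \cite{HK-untwist} and noting that only the case $\DD=B\N$ is needed is exactly how the paper proceeds.
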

In more detail, given a right fibration $X \to \tw(\DD)$, in the diagram
\[
\begin{tikzcd}
\lambda\upperstar(X) \drpullback \ar[r] \ar[d, "f"'] & X \ar[d]  \\
\el(\DD) \ar[r, "\lambda"'] \ar[d] & \tw(\DD) \\
 \simplexcategory &
\end{tikzcd}
\]
the simplicial map associated to $f$ (a map of right fibrations over $\simplexcategory$) is shown to be CULF.

We shall need the theorem only in the very special case where $\DD$ is $B\N$,
and
give an 
explicit description of $\lambda$ in this case.

%%%%%%%%%%%%%%%%%%%%%%%%%%%%%%%%%%%%%%%%%%%%%%%%%%
\subsection{Equivalence between 
\texorpdfstring{$\Deltainert\op$}{Δ-int-op}-diagrams and decomposition spaces over \texorpdfstring{$B\N$}{BN}
}
%%%%%%%%%%%%%%%%%%%%%%%%%%%%%%%%%%%%%%%%%%%%%%%%%%

\begin{thm}\label{main thm free}
  The $j\lowershriek$ construction induces an equivalence of $\infty$-categories
  $$
  \PrSh(\Deltainert) \simeq \kat{Decomp}_{/B\N} .
  $$
\end{thm}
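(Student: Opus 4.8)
The plan is to deduce \cref{main thm free} from the untwisting theorem (\cref{thm:untwist}) applied to $\DD = B\N$, together with the identification $\Deltainert \simeq \tw(B\N)$ of \cref{lemma twist BN}. Concretely, \cref{thm:untwist} gives an equivalence $\kat{Decomp}_{/B\N} \simeq \rfib(\tw(B\N))$, and composing with the equivalence $\rfib(\tw(B\N)) \simeq \rfib(\Deltainert) \simeq \PrSh(\Deltainert)$ (the latter being the standard straightening equivalence between right fibrations and presheaves) yields an abstract equivalence $\PrSh(\Deltainert) \simeq \kat{Decomp}_{/B\N}$. So the only real content left is to show that \emph{this} equivalence agrees with the functor $j\lowershriek$ — i.e.\ that the composite of the backward direction of \cref{thm:untwist} with straightening is naturally equivalent to $A \mapsto j\lowershriek(A)$ (viewed as a decomposition space equipped with its canonical CULF map to $B\N = j\lowershriek(1)$, which exists functorially by \cref{prop:culf} applied to $A \to 1$).

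First I would unwind the backward direction of \cref{thm:untwist} in the case $\DD = B\N$. Given $A \in \PrSh(\Deltainert)$ with associated right fibration $\widetilde A \to \Deltainert \simeq \tw(B\N)$, the theorem produces the CULF map over $\simplexcategory$ obtained as the left vertical in the pullback of $\widetilde A \to \tw(B\N)$ along $\lambda \colon \el(B\N) \to \tw(B\N)$. So I need an explicit description of $\lambda$ in this case: on objects it sends a $k$-tuple $(n_1,\dots,n_k) \in \N^k = \el(B\N)_k$ to the associated active map $[1] \actto [n]$ with $n = \sum n_i$ — but then, under the identification $\Deltainert \simeq \tw(B\N)$, the value $j\lowershriek(A)_k = \sum_{[k]\actto[n]} A_n$ from \cref{j_!A} must be matched with the fiber of $\lambda\upperstar(\widetilde A)$ over $[k]$. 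The key observation, which I would isolate as a lemma, is that \cref{lem:ArractDeltacart=elBN} already identifies $\Arr^{\operatorname{act}}(\simplexcategory)^{\cart} \simeq \el(B\N)$ as right fibrations over $\simplexcategory$, and that under this identification the functor $\codom \colon \Arr^{\operatorname{act}}(\simplexcategory)^{\cart} \to \Deltainert$ of \cref{j! as pbk} corresponds to $\lambda \colon \el(B\N) \to \tw(B\N) \simeq \Deltainert$. Granting this, the pullback square defining $j\lowershriek(\widetilde A)$ in \cref{j! as pbk} is literally the pullback square defining the backward functor of \cref{thm:untwist} at $\DD = B\N$, so the two functors coincide on the nose (up to the chain of equivalences), and in particular $j\lowershriek$ is an equivalence onto $\kat{Decomp}_{/B\N}$.

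The main obstacle I anticipate is precisely verifying that $\codom$ corresponds to $\lambda$ under \cref{lemma twist BN} and \cref{lem:ArractDeltacart=elBN} — i.e.\ checking that the square
\[
\begin{tikzcd}
\Arr^{\operatorname{act}}(\simplexcategory)^{\cart} \ar[r, "\codom"] \ar[d, "\simeq"'] & \Deltainert \ar[d, "\simeq"] \\
\el(B\N) \ar[r, "\lambda"'] & \tw(B\N)
\end{tikzcd}
\]
commutes (coherently). On objects this is the statement that the inert factor of an active map $[k] \actto [n]$, namely the inert map $[n_i] \rightarrowtail [n]$ picking out a block, matches the twisted-arrow data; chasing through \ref{Act=N^k} and \cref{lemma twist BN} this is a direct if fiddly check. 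The coherence — that these identifications assemble into an equivalence of functors between $\infty$-categories, not merely a bijection on objects and a matching of cartesian lifts — is where, as the authors hint, ``there is still some work to do.'' I would handle it by observing that all four objects are $1$-categorical (or right fibrations over a $1$-category), so it suffices to produce a natural isomorphism of ordinary functors, reducing the coherence problem to the object-and-morphism check already sketched in the proof of \cref{lem:ArractDeltacart=elBN}. Once $\codom \simeq \lambda$ is established, the rest is formal: transport the pullback square of \cref{thm:untwist} across the equivalences and compare with \cref{j! as pbk}.
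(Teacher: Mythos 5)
Your proposal follows essentially the same route as the paper: apply \cref{thm:untwist} at $\DD=B\N$ together with \cref{lemma twist BN}, then identify the resulting inverse equivalence with $j\lowershriek$ by checking that $\codom$ corresponds to $\lambda$ under the identifications of \cref{lem:ArractDeltacart=elBN} and \cref{lemma twist BN}, so that the two pullback squares (from \cref{j! as pbk} and from the backward direction of \cref{thm:untwist}) match. The only difference is that the paper carries out in full the ``fiddly'' object- and morphism-level computation of $\lambda$ (producing the triple $(a,m,b)$ and the inert map $(d^\bot)^a(d^\top)^b$) that you defer, but your plan for that check, including the reduction to $1$-categorical data, is exactly what is needed.
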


\begin{cor}\label{cor:free}
  A decomposition space is free if and only if it admits a CULF map to 
$B\N$.
\end{cor}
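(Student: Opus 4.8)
The plan is to derive \cref{cor:free} as an immediate consequence of \cref{main thm free}, using essentially nothing more than an unwinding of definitions. Recall that a decomposition space $X$ was \emph{defined} (in \cref{j=decomp}) to be free precisely when it is equivalent to $j\lowershriek(A)$ for some $\Deltainert\op$-presheaf $A$; so the task is to show that this condition is equivalent to the existence of a CULF map $X \to B\N$.

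For the forward direction, suppose $X$ is free, say $X \simeq j\lowershriek(A)$. Applying \cref{prop:culf} to the unique map $A \to 1$ of $\Deltainert\op$-diagrams gives that $j\lowershriek(A) \to j\lowershriek(1)$ is CULF, and by \cref{lemma:BN} we have $j\lowershriek(1) \simeq B\N$; composing with the equivalence $X \simeq j\lowershriek(A)$ produces the desired CULF map $X \to B\N$. For the converse, suppose $X$ admits a CULF map $X \to B\N$; this is exactly the data of an object of $\kat{Decomp}_{/B\N}$. By \cref{main thm free}, the functor $j\lowershriek \colon \PrSh(\Deltainert) \to \kat{Decomp}_{/B\N}$ is an equivalence, so this object is equivalent (in $\kat{Decomp}_{/B\N}$, hence in particular as a decomposition space) to $j\lowershriek(A)$ for some $A \in \PrSh(\Deltainert)$. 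Thus $X$ is free.

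I do not expect any serious obstacle here: both directions are formal once \cref{main thm free}, \cref{prop:culf}, and \cref{lemma:BN} are in hand. The only point requiring a word of care is that being ``free'' is a property of the bare decomposition space, whereas the equivalence of \cref{main thm free} a priori lives over $B\N$; but an equivalence in the slice $\kat{Decomp}_{/B\N}$ certainly induces an equivalence of underlying decomposition spaces, so this is harmless. (One could also note that \cref{proposition ff} makes the forgetful functor $\kat{Decomp}_{/B\N} \to \kat{Decomp}$ fully faithful, so the CULF map to $B\N$ is in fact unique when it exists, but this refinement is not needed for the corollary itself.)
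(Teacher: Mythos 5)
Your proof is correct and matches the paper's intended argument: the paper gives no separate proof of \cref{cor:free}, deriving it (as stated in the introduction) directly from \cref{main thm free}, with the forward direction available just as you do it via \cref{prop:culf} applied to $A \to 1$ together with \cref{lemma:BN}. Your remark that an equivalence in $\kat{Decomp}_{/B\N}$ induces an equivalence of underlying decomposition spaces is exactly the right (and only) point of care.
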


\begin{proof}[Proof of Theorem~\ref{main thm free}]
  Theorem \ref{thm:untwist} and Lemma~\ref{lemma twist BN} give us 
  equivalences
  $$
  \kat{Decomp}_{/B\N} \isopil \rfib(\tw B\N) \isopil \PrSh(\Deltainert) .
  $$
  It only remains to see that the inverse equivalence is actually given by 
  $j\lowershriek$.
  
  To this end, for $A\in \PrSh(\Deltainert)$ and $\widetilde{A} \to \Deltainert$ its associated right fibration, we need to match up the diagrams
  \[
  \begin{tikzcd}
  \lambda\upperstar(\widetilde{A}) \drpullback \ar[r] \ar[d] & \widetilde{A} \ar[d]  \\
  \el(B\N) \ar[r, "\lambda"'] \ar[d] & \tw(B\N) \\
   \simplexcategory &
  \end{tikzcd}
  \qquad \text{ and }
  \qquad
  \begin{tikzcd}
  j\lowershriek(\widetilde A) \drpullback \ar[r] \ar[d] &  \widetilde A \ar[d] \\
  \Arr^{\operatorname{act}}(\simplexcategory)^{\cart} 
  \ar[d, "\dom"'] \ar[r, "\codom"'] & 
  \Deltainert \\
  \simplexcategory
  \end{tikzcd}
  \]
  because the first diagram computes the inverse equivalence according to
  Theorem~\ref{thm:untwist}, whereas the second diagram computes $j\lowershriek$
  according to \ref{j! as pbk}. But this is the content of 
  the following lemma.
\end{proof}

\begin{lemma}
  The identifications $\Arr^{\operatorname{act}}(\simplexcategory)^{\cart}
  \simeq \el (B \N)$ (Lemma~\ref{lem:ArractDeltacart=elBN}) and $\Deltainert
  \simeq \tw(B\N)$ (Lemma~\ref{lemma twist BN}) are compatible with the maps
  $\lambda$ and $\operatorname{codom}$:
  \[ \begin{tikzcd}
  \el (B \N) \ar[r, "\lambda"] \ar[d, "\simeq"'] & \tw (B \N) \ar[d, "\simeq"] \\
  \Arr^{\operatorname{act}}(\simplexcategory)^{\cart} \ar[r, 
  "\operatorname{codom}"']  & \Deltainert   .
  \end{tikzcd} \]
\end{lemma}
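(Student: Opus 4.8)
The plan is to unwind both $\lambda$ and $\codom$ explicitly on objects and morphisms, using the descriptions from \cref{Act=N^k}, \cref{lemma twist BN}, and \cref{lem:ArractDeltacart=elBN}, and check that the square commutes. First I would pin down the vertical equivalences. Under \cref{lem:ArractDeltacart=elBN}, an object of $\Arr^{\operatorname{act}}(\simplexcategory)^{\cart}$ over $[k]$, namely an active map $\alpha\colon[k]\actto[n]$, corresponds to the $k$-tuple $(n_1,\ldots,n_k)\in\N^k$ with $n_i = \alpha(i)-\alpha(i-1)$, viewed as an object $\Delta^k\to B\N$ of $\el(B\N)$. Under \cref{lemma twist BN}, an object of $\Deltainert$, say $[m]$, corresponds to the unique arrow $m\colon\cdot\to\cdot$ of $B\N$ regarded as an object of $\tw(B\N)$. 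So the two composites around the square are: going right-then-down, $\lambda$ sends $(n_1,\ldots,n_k)$ to the composite $\Delta^1\actto\Delta^k\to B\N$, which is the arrow $n_1+\cdots+n_k$ of $B\N$, and the right-hand equivalence sends that to the ordinal $[n_1+\cdots+n_k]=[n]$ in $\Deltainert$; going down-then-right, $\codom$ sends $\alpha\colon[k]\actto[n]$ to its codomain $[n]$. These agree on objects since $n=\sum_i n_i$.

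Next I would check compatibility on morphisms. A morphism in $\el(B\N)$ over a map $\phi\colon[k']\to[k]$ in $\simplexcategory$ is determined by $\phi$ together with the requirement that it is a map of simplices; applying $\lambda$ and then the equivalence to $\Deltainert$ yields the inert map obtained by restricting $\phi$ to the active part $\Delta^1\to\Delta^{k'}$, $\Delta^1\to\Delta^k$. On the other side, a morphism $\alpha'\to\alpha$ in $\Arr^{\operatorname{act}}(\simplexcategory)^{\cart}$ is a commuting square with top edge $\phi\colon[k']\to[k]$ and bottom edge an inert map $[n']\rightarrowtail[n]$; applying $\codom$ returns precisely that inert map. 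The verification is then exactly that these two inert maps $[n']\rightarrowtail[n]$ coincide — this follows because the bottom edge of the square is forced by the commuting condition and the active-inert factorization to be the inert map tracking where the block $\{\alpha'(0),\ldots,\alpha'(k')\}$ sits inside $\{\alpha(0),\ldots,\alpha(k)\}$, which is the same data as the restriction of $\phi$ along the active maps, i.e.\ the $\lambda$-side inert map. I would spell this out on the generating maps $d^\top,d^\bot$ of $\Deltainert$ and on the generators of $\simplexcategory$, as in the proof of \cref{lem:ArractDeltacart=elBN}.

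The main obstacle is bookkeeping rather than conceptual: one must be careful that both $\lambda$ and the equivalence of \cref{lemma twist BN} are being used with consistent conventions for the direction of the trapezoidal morphisms in $\tw(B\N)$ and for which end of $\Delta^k$ maps to which end of $\Delta^1$ under the active map. A convenient way to organize the check, which I would adopt, is to note that all four functors in the square are right fibrations over $\simplexcategory$ (the left-hand one is $\el(B\N)\to\simplexcategory$, the right-hand one is the $\el$ of the simplicial space $\nerve(\Deltainert)$-ish picture, but more simply: both $\Deltainert\to\simplexcategory$ via $\codom$ and $\tw(B\N)\to\simplexcategory$ via $\lambda$ followed by the canonical projection are the right fibrations classifying the same presheaf, namely $k\mapsto\operatorname{Act}(k)\cong\N^k$); since a right fibration is determined by its underlying presheaf and we have already matched the presheaves on the nose via \cref{Act=N^k}, commutativity of the square reduces to checking that the identification is the identity on that presheaf, which is immediate from the constructions. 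I would present the explicit-generators argument as the primary proof and mention this structural shortcut as a remark.
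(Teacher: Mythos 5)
Your primary argument is essentially the paper's own proof: unwind both composites explicitly, check on objects that both routes produce $[n]$ with $n=\sum_i n_i$, and check on morphisms that the inert bottom edge of the cartesian square in $\Arr^{\operatorname{act}}(\simplexcategory)^{\cart}$ (obtained by active--inert factorization of $\alpha\circ\phi$) coincides with the inert map produced by $\lambda$. The paper does exactly this, recording the triple $(a,m,b)$ with $a=\sum_{i\leq\phi(0)}n_i$, $m=\sum_{\phi(0)<i\leq\phi(h)}n_i$, $b=\sum_{i>\phi(h)}n_i$, so that both sides give $(d^\bot)^a(d^\top)^b\colon[m]\rightarrowtail[n]$; your sketch of the morphism case is the same computation, modulo a notational slip (the relevant interval inside $[n]$ is $\{\alpha(\phi(0)),\dots,\alpha(\phi(h))\}$, i.e.\ the image of the long edge under $\alpha\circ\phi$, not ``$\{\alpha'(0),\dots,\alpha'(k')\}$ inside $\{\alpha(0),\dots,\alpha(k)\}$'', which mixes up elements of $[n']$ and $[n]$). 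To make the $\lambda$-side fully precise you would still need the explicit recipe for $\lambda$ on morphisms (the paper's square involving $d^\top d^\bot\colon\Delta^1\to\Delta^3$), since ``restricting $\phi$ to the active part'' is not yet a computation.

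The ``structural shortcut'' you offer as a remark, however, is not sound and should be dropped. Neither $\Deltainert$ nor $\tw(B\N)$ is exhibited here as a right fibration over $\simplexcategory$ (the inclusion $\Deltainert\to\simplexcategory$ is not one, and $\codom$ and $\lambda$ are not maps over $\simplexcategory$ for the $\dom$-type projections on the left-hand column), so the square is not a square of right fibrations over $\simplexcategory$. More importantly, even where two right fibrations do classify equivalent presheaves, that only yields the existence of some equivalence between them; it does not show that the particular square in the statement commutes, which is precisely what must be verified by the explicit object-and-morphism check you (and the paper) carry out.
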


\begin{proof}
  On objects, start with an element $(n_1,\ldots,n_k)$ in $\el(B\N)$ 
  (lying over $[k]\in \simplexcategory)$). The component of $\lambda$ 
  takes this to the long edge, which is just the sum $n := 
  \sum_{i=1}^k n_i$, which is an object of $\tw(B\N)$. Under the 
  identification in Lemma~\ref{lemma twist BN}, this corresponds 
  to the object $[n] \in \Deltainert$. The other way around: via the 
  identification of Lemma~\ref{lem:ArractDeltacart=elBN}, $(n_1,\ldots,n_k) \in\el(B\N)$ 
  corresponds to $[k] \actto [n]$, whose codomain is $[n] \in 
  \Deltainert$.
  
  To see that the diagram commutes also on morphisms is a bit more 
  involved. A morphism in $\el(B\N)$ is the data of $\Delta^h 
  \stackrel{\phi}\longrightarrow
  \Delta^k \stackrel{(n_1,\ldots,n_k)}\longrightarrow B\N$. 
  To compute $\lambda$ (see \cite{HK-untwist} for details),
  we first need to write down the diagram
  \[
  \begin{tikzcd}
  \Delta^1 \ar[d, -act] \ar[r, "d^\top d^\bot"] & \Delta^3 \ar[d, 
  -act]   \\
  \Delta^h \ar[r, "\phi"'] & \Delta^k 
  \end{tikzcd}
  \quad
  \adjustbox{scale=0.7}{%
  \begin{tikzcd}[column sep={10mm,between origins}]
  0 \ar[d, mapsto]& 1 \ar[d, mapsto]& 2 \ar[d, mapsto]& 3 \ar[d, mapsto]
  \\
  0 & \phi(0) & \phi(h) & k
    \end{tikzcd}
  }
  \]
  The value of $\lambda$ is now
  the $3$-simplex $\Delta^3 \to B\N$ given by composition, namely the 
  $3$-tuple
  $$
  \left(  \sum_{i=1}^{\phi(0)} n_i, 
  \sum_{i=\phi(0)+1}^{\phi(h)} n_i,
  \sum_{i=\phi(h)+1}^k n_i \right)  =: (a,m,b) .
  $$
  Under the identification in Lemma~\ref{lemma twist BN}, this is the inert map
  $[m] \stackrel{(d^\bot)^a (d^\top)^b}\inertto [n]$.
      
  The other way around: starting again with the morphism $\Delta^h \stackrel{\phi}\longrightarrow 
  \Delta^k \stackrel{(n_1,\ldots,n_k)}\longrightarrow B\N$ in $\el(B\N)$, 
  under the identification of Lemma~\ref{lem:ArractDeltacart=elBN}
  the corresponding morphism in $\Arr^{\operatorname{act}}(\simplexcategory)^{\cart} $ is the commutative square
  \[
  \begin{tikzcd}
  {[h]} \ar[d, -act] \ar[r] & {[k]} \ar[d, -act]  \\
  {[m]} \ar[r, rightarrowtail] & {[n]}
  \end{tikzcd}
  \]
  given by active-inert 
  factorization of the composite 
  $[h]\to[k]\to[n]$, so we 
  have
  $$
  m = \sum_{i=\phi(0)+1}^{\phi(h)} n_i ,
  $$
  and the inclusion is given by writing $n=a+m+b$ where
  $a=\sum_{i=1}^{\phi(0)} n_i$ and $b= \sum_{i=\phi(h)+1}^k n_i$. 
  This is the same inert map as we found above.
\end{proof}

%%%%%%%%%%%%%%%%%%%%%%%%%%%%%%%%%%%%%%%%%%%%%%%%%%
\subsection{Fully faithfullness}
%%%%%%%%%%%%%%%%%%%%%%%%%%%%%%%%%%%%%%%%%%%%%%%%%%

Recall from \cite[\S 6]{GKT2} that the {\em length} of a $1$-simplex
$f\in X_1$ is by definition the biggest dimension of a nondegenerate
simplex $\sigma$ such that $f$ is the long edge of $\sigma$. 
If every
$1$-simplex has finite length, then it defines the {\em
length-filtration} on $X$, which is a function $X_1 \to \N$. 
As mentioned in \ref{blanko finiteness}, a decomposition space with length filtration is M\"obius if it is furthermore locally finite.

\begin{lemma}
  If $Y \to X$ is a CULF map of decomposition spaces, and if $X$ is
  M\"obius, then $Y$ is M\"obius again, and moreover the length
  filtration of $Y$ is induced from that of $X$ (by precomposition).
\end{lemma}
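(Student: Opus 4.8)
The plan is to treat the two assertions — that $Y$ is M\"obius, and that the length filtration is induced — in sequence, with the first being essentially a citation and the second the real content. For the first part, I would recall from \ref{blanko finiteness} that M\"obius-ness is preserved under CULF maps into a M\"obius decomposition space (this is the fact from \cite{GKT2} already invoked in the proof of \cref{j=decomp}); so $Y$ is M\"obius with no further work. The substance is the claim that the length filtration $\ell_Y \colon Y_1 \to \N$ agrees with the composite $Y_1 \to X_1 \xrightarrow{\ell_X} \N$.

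For this I would argue that $F \colon Y \to X$ being CULF sets up, for each $1$-simplex $g \in Y_1$ with image $f = F_1(g) \in X_1$, a bijection between nondegenerate simplices of $Y$ with long edge $g$ and nondegenerate simplices of $X$ with long edge $f$, preserving dimension. The key mechanism is that CULF means $F$ is cartesian on active maps: the unique active map $[1]\actto[n]$ gives, for each $n$, a pullback square relating $Y_n$ with long edge $g$ to $X_n$ with long edge $f$, so on fibers over $g$ (resp.\ $f$) we get equivalences $\{\sigma\in Y_n : \text{long edge } g\} \simeq \{\tau\in X_n : \text{long edge } f\}$. First I would verify that this family of equivalences is compatible with the simplicial structure that detects degeneracy — concretely, that $\sigma$ on the left is degenerate iff the corresponding $\tau$ on the right is. Since degeneracy of $\sigma\in Y_n$ with long edge $g$ is witnessed by some codegeneracy $s^i$, and codegeneracies are active maps, the cartesianness of $F$ on active maps transports degeneracy data back and forth; I would spell this out by noting that $\sigma = s^i(\sigma')$ for some $\sigma'\in Y_{n-1}$ (necessarily also with long edge $g$) corresponds under the pullback to $\tau = s^i(\tau')$ with $\tau'$ the image of $\sigma'$, and conversely. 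Hence the correspondence restricts to a dimension-preserving bijection on \emph{nondegenerate} simplices with the given long edge.

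Granting that, the length of $g$ — the largest $n$ for which a nondegenerate $n$-simplex with long edge $g$ exists — equals the largest such $n$ for $f$, which is $\ell_X(f)$; thus $\ell_Y(g) = \ell_X(F_1(g))$, i.e.\ the length filtration of $Y$ is precisely the precomposition of that of $X$ along $F_1$. In particular finiteness of length for $Y$ is inherited, consistent with the first part. I expect the main obstacle to be the bookkeeping in the compatibility check: making precise, in the $\infty$-categorical setting, that the system of fiber-equivalences indexed by active maps $[1]\actto[n]$ is natural enough that it identifies degenerate simplices on the two sides. This is where I would be most careful, though it should follow formally from the fact that all the transition maps in question (codegeneracies, inner cofaces restricted appropriately) are active, so that the CULF pullback squares assemble coherently; one can also phrase it via the equivalence $\kat{Decomp}_{/X} \simeq \rfib(\tw X)$ of \cref{thm:untwist}, under which $F$ corresponds to a right fibration over $\tw X$ and the length filtration becomes a manifestly pullback-stable invariant.
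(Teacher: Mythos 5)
Your proposal is correct and follows essentially the same route as the paper: the first assertion is the cited stability of M\"obius-ness under CULF maps, and the second comes down to the length filtration being defined purely in terms of nondegenerate simplices together with the fact that CULF maps preserve and reflect nondegeneracy (which the paper simply cites as Proposition~2.11 of \cite{GKT2}, while you re-derive it from cartesianness over the codegeneracies). Your fiber-equivalence and degeneracy-transport bookkeeping is exactly the content behind that citation, so there is no gap.
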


\begin{proof}
  The first statement is Proposition~6.5 of \cite{GKT2}. The more
  precise statement that in fact the length filtration of $Y$ is
  induced from that of $X$ follows from the main ingredient in the
  proof of that Prop.~6.5, namely the fact (Proposition~2.11 of
  \cite{GKT2}) that CULF maps preserve and reflect nondegenerate
  simplices. But the length filtration is defined entirely in terms of
  dimensions of nondegenerate simplices.
\end{proof}

Next note that $B\N$ admits the length filtration tautologically: the
length of a $1$-simplex $n\in (B\N)_1$ is clearly $n$. Now if we are
given a CULF map $\phi \colon X \to B\N$, then by the lemma $\phi_1
\colon X_1 \to (B\N)_1 = \N$ is the length filtration. In particular,
if we have two CULF maps $\phi$ and $\psi$, then they must agree in
simplicial degree $1$, since the notion of length is intrinsic to $X$.
But if they agree in simplicial degree $1$, then
they must in fact agree in all degrees, as seen from the coincidence 
of the two commutative
squares
\[
\begin{tikzcd}[row sep={48pt,between origins}]
    X_n \ar[r] \ar[d, "\phi_n \overset{?}= \psi_n"'] & 
\prod_{i=1}^n X_1 \ar[d, "\prod \phi_1 =\prod \psi_1"] \\
    (B\N)_n \ar[r, "\cong"] & \prod_{i=1}^n (B\N)_1 .
\end{tikzcd}
\]
In conclusion we have established the following result.

\begin{lemma}\label{lemma one CULF map}
  If a decomposition space admits a CULF map to $B\N$, then it admits
  precisely one such map.
\end{lemma}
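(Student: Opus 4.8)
The plan is to read the statement off from the two observations recorded just above it: that $B\N$ is a M\"obius decomposition space carrying a tautological length filtration (the length of $n \in (B\N)_1$ equals $n$), and that the preceding lemma forces the degree-$1$ component of \emph{any} CULF map into $B\N$ to coincide with the length filtration of its source. First I would note that $B\N$ is M\"obius --- it is the nerve of the M\"obius category $\N$ --- and compute its length filtration explicitly: the nondegenerate $k$-simplices of $B\N$ with long edge $n$ are the tuples of positive integers summing to $n$, the maximal such dimension being $n$ (realized by $(1,\dots,1)$), so the length filtration $(B\N)_1 \to \N$ is the identity.

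Next, given a CULF map $\phi \colon X \to B\N$ --- note that $X$ is then automatically M\"obius, being CULF over a M\"obius space --- the preceding lemma identifies $\phi_1 \colon X_1 \to (B\N)_1 = \N$ with the length filtration of $X$. Since the length filtration is intrinsic to $X$, any two CULF maps $\phi, \psi \colon X \to B\N$ must satisfy $\phi_1 = \psi_1$. It then remains to bootstrap this agreement from simplicial degree $1$ to all degrees. For this I would use, for each $n$, the spine map $X_n \to \prod_{i=1}^n X_1$ induced by the inert maps $\rho_i \colon [1] \rightarrowtail [n]$ --- a map canonically associated to $X$, independent of $\phi$ --- together with the naturality square relating it to the analogous map for $B\N$; since $B\N$ is the nerve of a category, the bottom edge $(B\N)_n \to \prod_{i=1}^n (B\N)_1$ is an isomorphism (both sides being $\N^n$). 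Commutativity then exhibits $\phi_n$ as the composite of the spine map, $\prod_{i=1}^n \phi_1$, and the inverse of that isomorphism, all depending only on $X$ and on $\phi_1$; hence $\phi_n = \psi_n$, naturally in $n$, so $\phi = \psi$ as simplicial maps.

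The main obstacle is the middle step: one must be careful that ``$\phi_1$ is the length filtration'' is a genuine equality of functions, which rests on the preceding lemma (CULF maps preserve and reflect nondegenerate simplices, so the length filtration of the source is obtained by precomposition from that of the target) rather than on any auxiliary choice. Once that is secured, the propagation to higher degrees is a formal diagram chase that uses only the Segal condition for $B\N$.
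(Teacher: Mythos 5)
Your proposal is correct and follows essentially the same route as the paper: identify $\phi_1$ with the intrinsic length filtration of $X$ via the preceding lemma and the tautological length filtration of $B\N$, then propagate agreement to all degrees using the square comparing $X_n \to \prod_{i=1}^n X_1$ with the isomorphism $(B\N)_n \cong \prod_{i=1}^n (B\N)_1$. No substantive differences.
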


\begin{prop}\label{proposition ff}
  The forgetful functor $\kat{Decomp}_{/B\N} \to \kat{Decomp}$ is fully
  faithful. 
\end{prop}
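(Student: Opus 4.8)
The plan is to deduce full faithfulness of the forgetful functor $U\colon \kat{Decomp}_{/B\N} \to \kat{Decomp}$ from \cref{lemma one CULF map}, which says the space of lifts of any decomposition space to an object of $\kat{Decomp}_{/B\N}$ is either empty or contractible. More precisely, for objects $\phi\colon X\to B\N$ and $\psi\colon Y\to B\N$ of the slice, I want to show that the map of mapping spaces
$$
\Map_{\kat{Decomp}_{/B\N}}\bigl((X,\phi),(Y,\psi)\bigr) \longrightarrow \Map_{\kat{Decomp}}(X,Y)
$$
is an equivalence. The standard fibre-sequence description of mapping spaces in a slice $\infty$-category exhibits the left-hand side as the fibre of $\Map_{\kat{Decomp}}(X,Y) \xrightarrow{\psi\circ(-)} \Map_{\kat{Decomp}}(X,B\N)$ over the point $\phi$. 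So the claim is equivalent to: for each CULF map $F\colon X\to Y$, the fibre of $\psi\circ(-)$ over $\phi$ through the point $F$ is contractible; equivalently, postcomposition with $\psi$ induces an equivalence of the relevant path components and, on those, trivial homotopy groups.

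The key input is that $\Map_{\kat{Decomp}}(X,B\N)$ is, by \cref{lemma one CULF map}, \emph{either empty or contractible} — here it is nonempty because $\phi$ exists, hence it is contractible. (One must note that \cref{lemma one CULF map} as stated asserts uniqueness on $\pi_0$; to get contractibility one observes that the same length-filtration argument, applied to the internal mapping object, shows there is no room for higher automorphisms either — a CULF self-map of $X$ over $B\N$ fixing degree $1$ is the identity, and this rigidity propagates to the whole mapping space. Alternatively, and more cleanly, one invokes \cref{main thm free}: $\kat{Decomp}_{/B\N} \simeq \PrSh(\Deltainert)$, and separately one identifies $\Map_{\kat{Decomp}}(X,B\N)$ with the fibre computation making it contractible whenever inhabited.) Given that $\Map_{\kat{Decomp}}(X,B\N)$ is contractible, the map $\psi\circ(-)\colon \Map_{\kat{Decomp}}(X,Y)\to \Map_{\kat{Decomp}}(X,B\N)$ is a map to a contractible space, so \emph{every} fibre is equivalent to the total space $\Map_{\kat{Decomp}}(X,Y)$ itself — but that is the wrong direction. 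So instead I argue the other way: the fibre of $\psi\circ(-)$ over $\phi$ is $\Map_{\kat{Decomp}_{/B\N}}((X,\phi),(Y,\psi))$ by definition of the slice, and since the base $\Map_{\kat{Decomp}}(X,B\N)$ is contractible, this fibre is equivalent to the total space $\Map_{\kat{Decomp}}(X,Y)$, which is exactly the full faithfulness statement $U$ is to satisfy.

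Written out: first recall/establish that $\Map_{\kat{Decomp}}(X,B\N)$ is contractible for any decomposition space $X$ admitting a CULF map to $B\N$ — this is the enhancement of \cref{lemma one CULF map} to the space level, proved by the same length-filtration rigidity argument (a CULF map to $B\N$ is determined in degree $1$ as the length filtration, hence in all degrees by the Segal-type square displayed before \cref{lemma one CULF map}, and this argument relativizes over a parameter to kill higher homotopy). Second, recall the general fact that for $\mathcal{C}$ any $\infty$-category and $p\colon c\to b$ an object of $\mathcal{C}_{/b}$, the forgetful functor $\mathcal{C}_{/b}\to\mathcal{C}$ induces on mapping spaces the fibre sequence $\Map_{\mathcal{C}_{/b}}((c,p),(c',p')) \to \Map_{\mathcal{C}}(c,c') \to \Map_{\mathcal{C}}(c,b)$ over $p$. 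Third, combine: the base of this fibre sequence is contractible by step one (it is nonempty, containing $p=\phi$), so the fibre inclusion $\Map_{\kat{Decomp}_{/B\N}}((X,\phi),(Y,\psi)) \hookrightarrow \Map_{\kat{Decomp}}(X,Y)$ is an equivalence. The main obstacle is step one — promoting the $\pi_0$-level uniqueness of \cref{lemma one CULF map} to an honest contractibility of the mapping space $\Map_{\kat{Decomp}}(X,B\N)$; once that is in hand, the rest is the formal fibre-sequence bookkeeping for slice $\infty$-categories.
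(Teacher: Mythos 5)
Your proof is correct and is essentially the paper's own argument: the paper likewise computes $\operatorname{Map}_{/B\N}(\phi,\psi)$ as the fibre of postcomposition $\operatorname{Map}(X,X')\to\operatorname{Map}(X,B\N)$ over the point $\phi$, and concludes from \cref{lemma one CULF map} that the map $1\to\operatorname{Map}(X,B\N)$ picking out $\phi$ is an equivalence, whence the fibre inclusion is one too. The space-level enhancement you flag as the main obstacle is indeed needed but is immediate: $B\N$ is levelwise discrete, so $\operatorname{Map}(X,B\N)$ is $0$-truncated and the uniqueness statement of \cref{lemma one CULF map} already yields contractibility, with no extra rigidity argument required.
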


\begin{proof}
  The mapping space from $\phi\colon X \to B\N$ to $\phi' \colon X' 
  \to B\N$ of the slice $\infty$-category $\kat{Decomp}_{/B\N}$ is calculated by the fiber
  sequence
  \[
  \begin{tikzcd}
  \operatorname{Map}_{/B\N}( \phi, \phi') \drpullback \ar[r] \dar & \operatorname{Map}(X,X') \ar[d, 
  "\operatorname{post} \phi'"] \\
  1 \ar[r, "\name{\phi}"'] & \operatorname{Map}(X,B\N) .
  \end{tikzcd}
  \]
  The bottom horizontal map, which picks out the unique map $\phi$, is
  an equivalence by \cref{lemma one CULF map}, so by pullback also the top
  horizontal map is an equivalence, which is the statement of fully
  faithfulness.
\end{proof}

In conjunction with \cref{main thm free} we get
\begin{cor}\label{cor j! ff}
  The functor $j\lowershriek \colon \PrSh(\Deltainert) \to
  \kat{Decomp}$ is fully faithful.
\end{cor}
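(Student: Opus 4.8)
The plan is to realize $j\lowershriek$ as a composite of the equivalence of \cref{main thm free} with the forgetful functor of \cref{proposition ff}, and then use that a composite of fully faithful functors is fully faithful.

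First I would check that $j\lowershriek \colon \PrSh(\Deltainert) \to \kat{Decomp}$ lifts canonically through the slice $\kat{Decomp}_{/B\N}$. Each $A \in \PrSh(\Deltainert)$ has a unique map $A \to 1$ to the terminal presheaf; applying $j\lowershriek$ and using the identification $j\lowershriek(1) \simeq B\N$ from \cref{lemma:BN} produces a map $j\lowershriek(A) \to B\N$, which is CULF by \cref{prop:culf}. This assignment is natural in $A$ (in the $\infty$-categorical sense, all higher coherences are forced because $1$ is terminal), so it exhibits $j\lowershriek$ as the composite
\[
\PrSh(\Deltainert) \longrightarrow \kat{Decomp}_{/B\N} \longrightarrow \kat{Decomp},
\]
the second arrow being the forgetful functor.

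Next I would identify the first arrow with the equivalence of \cref{main thm free}. Unwinding the proof of that theorem, the equivalence $\PrSh(\Deltainert) \isopil \kat{Decomp}_{/B\N}$ is by construction ``$A \mapsto j\lowershriek(A)$ equipped with its structure map to $B\N$'', so the first arrow above is an equivalence, in particular fully faithful. The second arrow is fully faithful by \cref{proposition ff}. Since a composite of fully faithful functors is fully faithful, the corollary follows.

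The only point requiring genuine care is the middle step: one must confirm that the lifted functor $\PrSh(\Deltainert)\to\kat{Decomp}_{/B\N}$ constructed above coincides with the equivalence of \cref{main thm free}, and not merely that the two functors agree on objects. This is however not new content — it is bookkeeping about how the structure maps to $B\N$ are threaded through the untwisting theorem and \cref{lemma twist BN} — and so I expect it to be the routine-but-fiddly part rather than a substantive obstacle.
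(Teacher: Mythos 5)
Your proof is correct and follows the paper's own route: the paper deduces \cref{cor j! ff} precisely by factoring $j\lowershriek$ through the equivalence of \cref{main thm free} followed by the fully faithful forgetful functor of \cref{proposition ff}. The extra care you take in checking that the lift $\PrSh(\Deltainert)\to\kat{Decomp}_{/B\N}$ agrees with the equivalence is exactly the content already built into the statement of \cref{main thm free} (``the $j\lowershriek$ construction induces an equivalence''), so nothing further is needed.
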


%%%%%%%%%%%%%%%%%%%%%%%%%%%%%%%%%%%%%%%%%%%%%%%%%%
\section{Miscellaneous results}
%%%%%%%%%%%%%%%%%%%%%%%%%%%%%%%%%%%%%%%%%%%%%%%%%%

%%%%%%%%%%%%%%%%%%%%%%%%%%%%%%%%%%%%%%%%%%%%%%%%%%
\subsection{Remarks about sheaves}
%%%%%%%%%%%%%%%%%%%%%%%%%%%%%%%%%%%%%%%%%%%%%%%%%%

We have shown that $j\lowershriek (A)$ is always a decomposition space.
In this subsection we analyze under what conditions it is actually Segal.

It is well known (see e.g.~\cite{Berger:Cellular} or \cite{Kock:0807})
that $\Deltainert$ (interpreted as the category of nonempty linear 
graphs) has a Grothendieck topology for which a family of 
arrows constitute a covering when it is jointly surjective on vertices 
and edges. The elementary graphs are $[0]$ (the vertex) and $[1]$ (the 
edge), and every linear graph is canonically covered by its elementary 
subgraphs. We get in this way a canonical equivalence
$$
\PrSh(\simplexcategory_{\operatorname{el}}) \simeq
\Sh(\Deltainert)
$$
given by left Kan extension along the (full) inclusion $\simplexcategory_{\operatorname{el}}
\subset\Deltainert$ of the category of elementary graphs.
Note that $\PrSh(\simplexcategory_{\operatorname{el}})$ is the category of graphs.

\begin{prop}\label{prop:sheaf}
    The free decomposition space $j\lowershriek(A) : \simplexcategory\op\to\spaces$ is Segal if and only if $A : 
  \Deltainert\op\to\spaces$ is a sheaf.
\end{prop}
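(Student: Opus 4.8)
The plan is to characterize Segalness of $j\lowershriek(A)$ in terms of the restriction $j\upperstar j\lowershriek(A)$, and then identify this with the sheafification condition on $A$. First I would recall that a simplicial space $X$ is Segal precisely when, for each $n$, the map $X_n \to X_1 \times_{X_0} \cdots \times_{X_0} X_1$ is an equivalence; and that for a decomposition space, being Segal can be tested on the inert maps, i.e.\ it is a condition on $j\upperstar(X)$ — specifically, $X$ is Segal iff $j\upperstar(X)$ satisfies the descent/sheaf condition for the Grothendieck topology on $\Deltainert$ described just above the statement (covering families are those jointly surjective on vertices and edges). This is essentially the nerve-theorem perspective invoked in the introduction: whether a decomposition space is the nerve of a category depends only on its inert part, and the relevant gluing is exactly the sheaf condition on $\Deltainert$.

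Given that reduction, the crux becomes computing $j\upperstar j\lowershriek(A)$ and comparing it to $A$. Since $j\lowershriek \isleftadjointto j\upperstar$, there is a unit $\eta \colon A \to j\upperstar j\lowershriek(A)$, and I would show: (i) $j\upperstar j\lowershriek(A)$ is always a sheaf on $\Deltainert$, and (ii) the unit $\eta$ exhibits $j\upperstar j\lowershriek(A)$ as the sheafification of $A$. Then $j\lowershriek(A)$ is Segal $\iff$ $j\upperstar j\lowershriek(A)$ is a sheaf \emph{and} equals (the restriction computes back to) the relevant data — but more cleanly: $j\lowershriek(A)$ is Segal iff $\eta$ is an equivalence iff $A$ is already a sheaf. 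For (i), the key computation uses \cref{j_!A}: in degree $m$, $j\upperstar j\lowershriek(A)_m = j\lowershriek(A)_m = \sum_{[m]\actto[n]} A_n$, and I would check that the descent diagram for a covering of $[m]$ by elementary subgraphs is carried to a limit diagram — the point being that the indexing sets of active maps decompose compatibly with gluing linear graphs, and the $A_n$-summands glue because the covering families only ever involve the outer faces, which is exactly the data $A$ carries as a $\Deltainert$-presheaf. For (ii), I would use that $j$ is the composite $\simplexcategory_{\mathrm{el}} \subset \Deltainert \subset \simplexcategory$ together with the stated equivalence $\PrSh(\simplexcategory_{\mathrm{el}}) \simeq \Sh(\Deltainert)$: left Kan extension along $\simplexcategory_{\mathrm{el}} \subset \Deltainert$ is the sheafification, so $j\upperstar j\lowershriek(A)$ being a sheaf and the unit being sheafification-to-$A$ follows by the usual adjunction-composition bookkeeping.

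Assembling: if $A$ is a sheaf, then $\eta$ is an equivalence on elementary objects by adjunction and on all of $\Deltainert$ since both sides are sheaves, so $j\lowershriek(A)$ restricts to a sheaf and hence is Segal. Conversely, if $j\lowershriek(A)$ is Segal, its inert part $j\upperstar j\lowershriek(A)$ is a sheaf, and combined with the fact (from the triangle identities, since $j\lowershriek$ is fully faithful by \cref{cor j! ff}, hence $\eta$ is an equivalence) that $A \simeq j\upperstar j\lowershriek(A)$, we conclude $A$ is a sheaf.

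The main obstacle I anticipate is the first reduction — verifying carefully that Segalness of the \emph{decomposition space} $j\lowershriek(A)$ is equivalent to the sheaf condition on $j\upperstar j\lowershriek(A)$ for the graph topology on $\Deltainert$. The Segal maps $X_n \to X_1\times_{X_0}\cdots\times_{X_0} X_1$ involve pullbacks over $X_0$ along inert maps, so one must know that a decomposition space already has the right homotopy pullbacks available to even phrase the sheaf condition, and that the Segal condition is \emph{exactly} the $\check{\mathrm C}$ech descent for elementary covers rather than something weaker; this uses the decomposition-space exactness together with the standard combinatorics of linear graphs. Everything after that reduction — the computation of $j\upperstar j\lowershriek(A)$ via \cref{j_!A} and the identification of the unit with sheafification — I expect to be routine given the machinery already in place.
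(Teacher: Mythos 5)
Your opening reduction is fine (for these $X$ one can test Segalness on the inert part, i.e.\ $X$ is Segal iff $j\upperstar X$ is a sheaf; the paper uses exactly this in the corollary following the proposition), but the heart of your argument rests on two claims that are false. Claim (i), that $j\upperstar j\lowershriek(A)$ is always a sheaf, would immediately imply that \emph{every} free decomposition space is Segal, contradicting the statement you are proving (and the paper's explicit non-Segal examples, e.g.\ the truncated path spaces of Dyckerhoff--Kapranov). Concretely, by \cref{j_!A} one has $j\upperstar j\lowershriek(A)_k=\sum_{[k]\actto[n]}A_n$; for $A$ with $A_0=*$, $A_1=S$, $A_n=\emptyset$ ($n\geq 2$) this gives $*+S+S$ in degree $2$, not $S\times S$, so it is not a sheaf and is not the sheafification of $A$ (which is the path presheaf $S^n$). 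Claim (ii) fails for the same reason, and likewise the unit $\eta\colon A\to j\upperstar j\lowershriek(A)$ is essentially never an equivalence: already in degree $1$ it is the inclusion of the summand $A_1$ into $\sum_n A_n$. The appeal to \cref{cor j! ff} to get ``$\eta$ is an equivalence'' is a misreading: that corollary says $j\lowershriek$ is fully faithful into $\kat{Decomp}$, whose morphisms are only the CULF maps, so it says nothing about the presheaf-level adjunction $j\lowershriek\isleftadjointto j\upperstar$; indeed $j\colon\Deltainert\to\simplexcategory$ is not full, so $j\lowershriek$ between presheaf categories is not fully faithful and the unit is not invertible. With (i), (ii) and the unit claim gone, both directions of your argument collapse (even your forward direction asserts $\eta$ is an equivalence on $[1]$, which the word example $W(S)$ already refutes).

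The step you dismissed as ``routine'' is where the actual content lies, and it is how the paper proceeds: compute both sides of the Segal map directly from \cref{j_!A}. One finds $j\lowershriek(A)_k=\sum_{\alpha\colon[k]\actto[n]}A_n$ while $X_1\times_{X_0}\cdots\times_{X_0}X_1=\sum_{(n_1,\ldots,n_k)}A_{n_1}\times_{A_0}\cdots\times_{A_0}A_{n_k}$, and the Segal map sends the $\alpha$-summand to the $(n_1,\ldots,n_k)$-summand via $(\gamma^\alpha_1,\ldots,\gamma^\alpha_k)\upperstar\colon A_n\to A_{n_1}\times_{A_0}\cdots\times_{A_0}A_{n_k}$, where the $\gamma^\alpha_i$ are the inert factors from \ref{gamma}. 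These families are precisely the reduced covers for the graph topology on $\Deltainert$, so the Segal maps are equivalences iff all these descent maps are, i.e.\ iff $A$ is a sheaf. If you want to keep your structure, the honest intermediate statement is ``$j\upperstar j\lowershriek(A)$ is a sheaf iff $A$ is a sheaf'' (the paper's subsequent corollary), but proving that requires this same summand-by-summand computation, not a sheafification argument.
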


\begin{proof}  
  The Segal map for $X:=j\lowershriek (A)$ is (for each $k$) the map
  \begin{equation}\label{Segal}
  X_k \longrightarrow X_1 \times_{X_0} \cdots \times_{X_0} X_1  .
  \end{equation}
  Lemma~\ref{j_!A} gives
  $$X_k \ = \sum_{\alpha: [k] \actto [n]} A_n 
  $$
  on the left, whereas the right-hand side unpacks to
  $$
  {\textstyle
  \big(\underset{n_1}{\sum}\, A_{n_1}\big) \times_{A_0} \cdots \times_{A_0} 
  \big(\underset{n_k}{\sum}\, A_{n_k}\big)
  } \ = \!
    \sum_{(n_1,\ldots,n_k)} \!\!\big( A_{n_1} \times_{A_0} \cdots 
  \times_{A_0} A_{n_k} \big) .
$$
  The Segal map is the specific combination of inert maps
  $$
  \sum_{\alpha: [k] \actto [n]} A_n \longrightarrow 
  \sum_{(n_1,\ldots,n_k)} \big( A_{n_1} \times_{A_0} \cdots 
  \times_{A_0} A_{n_k} \big)
  $$
  given by sending the $\alpha$-summand to the 
  $(n_1,\ldots,n_k)$-summand via the map
  $$
  (\gamma^\alpha_1,\ldots,\gamma^\alpha_k)\upperstar: 
    A_n \longrightarrow  A_{n_1} \times_{A_0} \cdots 
  \times_{A_0} A_{n_k} .
  $$
  Recall from \ref{gamma} that the maps $\gamma_i^\alpha$ are the maps
  appearing in the active-inert factorization of $\alpha 
  \circ \rho_i$:
  \[
  \begin{tikzcd}
  {[1]} \ar[d, "\rho_i"'] \ar[r, dotted, -act] & {[n_i]} \ar[d, dotted, 
  "\gamma^\alpha_i"] \\
  {[k]} \ar[r, "\alpha"']& {[n]}
  \end{tikzcd}
  \]
  The families $\{\gamma^\alpha_i\mid i\in [k]\}$ are precisely the reduced 
  coverings of $[n]$, so if 
  $A$ is a sheaf, all the maps 
  $(\gamma^\alpha_1,\ldots,\gamma^\alpha_k)\upperstar$ are 
  thus equivalences, and therefore the Segal map, which is the sum of 
  them all,
  is an equivalence, which is to say that $X$ is Segal. Conversely, if
  $X$ is Segal, all these maps $(\gamma^\alpha_1,\ldots,\gamma^\alpha_k)\upperstar$
  are equivalences, ensuring that $A$ is a sheaf.
\end{proof}

This result is only slightly more precise than the following
classical result:

\begin{cor}[Street~\cite{Street:categorical-structures}] 
  A category admits a CULF functor to $B\N$ if and only if it 
  is the free category on a directed graph.
\end{cor}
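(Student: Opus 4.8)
The plan is to deduce this corollary from \cref{main thm free} and \cref{prop:sheaf} by restricting both sides to discrete objects. First I would observe that, by the formula $j\lowershriek(A)_k=\sum_{[k]\actto[n]}A_n$ of \cref{j_!A}, the free decomposition space $j\lowershriek(A)$ is levelwise discrete if and only if the presheaf $A$ is set-valued: discreteness of $j\lowershriek(A)_1=\sum_{n}A_n$ already forces every $A_n$ to be discrete, while conversely the indexing sets $\{[k]\actto[n]\}\cong\N^{k}$ are discrete. Since an equivalence restricts to an equivalence between corresponding full subcategories, $j\lowershriek$ thus restricts to an equivalence between the ordinary presheaf category $\kat{Set}^{\Deltainert\op}$ and the full subcategory of $\kat{Decomp}_{/B\N}$ spanned by the levelwise discrete objects.

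Next, a levelwise discrete decomposition space that is moreover Segal is exactly the nerve of a $1$-category, and since the nerve is fully faithful a CULF simplicial map between two such nerves is exactly a CULF functor. Hence the full subcategory of $\kat{Decomp}_{/B\N}$ spanned by nerves of categories corresponds under $j\lowershriek$ to the set-valued $A$ for which $j\lowershriek(A)$ is Segal; by \cref{prop:sheaf} these are exactly the set-valued sheaves on $\Deltainert$, which by the equivalence $\PrSh(\simplexcategory_{\operatorname{el}})\simeq\Sh(\Deltainert)$ recalled at the start of this subsection is the category of directed graphs. Writing $A_G$ for the sheaf attached to a graph $G$, it follows that a category $\mathcal{C}$ carrying a CULF functor $\mathcal{C}\to B\N$ (there is at most one, by \cref{lemma one CULF map}) has $N\mathcal{C}\simeq j\lowershriek(A_G)$ for a unique $G$, and conversely that $j\lowershriek(A_G)$ is always the nerve of a category carrying a CULF functor to $B\N$ --- indeed $j\lowershriek(A_G)\to j\lowershriek(1)=B\N$ is CULF by \cref{prop:culf} and \cref{lemma:BN}.

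It only remains to identify $j\lowershriek(A_G)$ with the nerve of the free category $\mathrm{Free}(G)$ on $G$ (vertices as objects, directed paths as morphisms, concatenation as composition). Unwinding $A_G$: each linear graph $[n]$ is canonically covered by its $n$ elementary edges glued along shared vertices, so the sheaf condition gives $(A_G)_n\cong G_1\times_{G_0}\cdots\times_{G_0}G_1$ ($n$ factors), the set of length-$n$ paths in $G$. We already know $j\lowershriek(A_G)$ is a levelwise discrete Segal space, hence the nerve of a category; that category has object set $(A_G)_0=G_0$ and morphism set $j\lowershriek(A_G)_1=\sum_n(A_G)_n$, the set of all directed paths in $G$, with identities the length-$0$ paths, and --- tracing the face map $d_1$ on $j\lowershriek(A_G)_2=\sum_{[2]\actto[n]}(A_G)_n$ --- composition given by concatenation; so it is $\mathrm{Free}(G)$. (One can equally read this off from $j\lowershriek(A_G)_k=\sum_{[k]\actto[n]}(A_G)_n$ together with the bijection $\operatorname{Act}(k)\cong\N^{k}$ of \ref{Act=N^k}, under which an active map $[k]\actto[n]$ records $n=n_1+\cdots+n_k$ and a length-$n$ path is grouped into $k$ concatenable subpaths of lengths $n_i$, exhibiting $j\lowershriek(A_G)_k$ as $N(\mathrm{Free}(G))_k$.) For the converse implication it is quickest to check directly that the length functor $\mathrm{Free}(G)\to B\N$ sending a path of length $n$ to $n\in\N$ is CULF: it is conservative since only length-$0$ paths are identities, and it has unique lifting of factorizations since a path has a unique prefix/suffix splitting of any prescribed pair of lengths. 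The one genuinely technical point is matching the simplicial operators of $j\lowershriek(A_G)$ with those of $N(\mathrm{Free}(G))$, but this is routine given \cref{j_!A} and the iterated-fibre-product description of $A_G$.
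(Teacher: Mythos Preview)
Your proof is correct and follows the route the paper intends: the paper presents Street's corollary immediately after \cref{prop:sheaf} with the remark that \cref{prop:sheaf} is ``only slightly more precise,'' leaving the deduction implicit. Your argument spells out exactly that deduction --- combine \cref{main thm free} with \cref{prop:sheaf}, restrict to the discrete case, and identify set-valued sheaves on $\Deltainert$ with directed graphs --- so the approach is essentially the same, just with the details filled in (including the explicit identification of $j\lowershriek(A_G)$ with the nerve of the free category on $G$, which the paper defers to the ``Graphs and paths'' example in \S\ref{section examples}).
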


The following corollary may be surprising at first sight:
\begin{cor}
  $A : 
  \Deltainert\op\to\spaces$ is a sheaf if and only if $j\upperstar 
  j\lowershriek (A) : 
  \Deltainert\op\to\spaces$ is a sheaf.
\end{cor}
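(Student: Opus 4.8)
The plan is to deduce this corollary from \cref{prop:sheaf} together with the general fact that $j\lowershriek$ is fully faithful (\cref{cor j! ff}), reading the statement as a comparison between two sheaf conditions that both land on the same object. First I would observe that one direction is immediate: if $A$ is a sheaf, then by \cref{prop:sheaf} the free decomposition space $j\lowershriek(A)$ is Segal, hence its restriction $j\upperstar j\lowershriek(A)$ is the nerve of a category --- indeed the nerve of the free category on the graph underlying $A$ --- and in particular $j\upperstar j\lowershriek(A)$ is a sheaf on $\Deltainert$ by the classical nerve theorem recalled in the introduction (a simplicial space restricted to $\Deltainert$ is a sheaf precisely when the original space is Segal).

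For the converse, suppose $j\upperstar j\lowershriek(A)$ is a sheaf. The key point is that the counit (or rather the unit) of the adjunction $j\lowershriek \isleftadjointto j\upperstar$ relates $A$ to $j\upperstar j\lowershriek(A)$, and because $j\lowershriek$ is fully faithful (\cref{cor j! ff}), the unit $\eta \colon A \to j\upperstar j\lowershriek(A)$ is an equivalence. Hence $A$ itself is equivalent to a sheaf, so $A$ is a sheaf. Concretely one checks that the sheaf condition on $A$ --- that for every $[n]$ the canonical map $A_n \to A_{n_1}\times_{A_0}\cdots\times_{A_0}A_{n_k}$ associated to a reduced covering is an equivalence --- is inherited across the equivalence $\eta$, since $j\upperstar$ does not change the values $A_0$, $A_1$ and these covering maps are built from outer face maps $d^\bot, d^\top$ which live entirely in $\Deltainert$.

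The step I expect to be the main obstacle is verifying cleanly that the unit $\eta\colon A \to j\upperstar j\lowershriek(A)$ is an equivalence and that this transports the sheaf condition faithfully; one must be a little careful because $j\lowershriek(A)$ sees all active maps (via the sum-over-active-maps formula of \cref{j_!A}), whereas $A$ and $j\upperstar j\lowershriek(A)$ only see $\Deltainert$, so the comparison is really between the $n=\sum n_i$ summand of $(j\lowershriek A)_k$ indexed by the identity-on-edges active map and the value $A_n$. An alternative, perhaps slicker, route avoids the unit altogether: combine \cref{prop:sheaf} in both directions with the observation that $j\lowershriek(A)$ is Segal if and only if the Segal maps \eqref{Segal} are equivalences, and these Segal maps factor through the restriction $j\upperstar j\lowershriek(A)$ --- so Segalness of $j\lowershriek(A)$, sheafness of $A$, and sheafness of $j\upperstar j\lowershriek(A)$ are all equivalent formulations of the same condition on the reduced-covering maps $(\gamma^\alpha_1,\ldots,\gamma^\alpha_k)\upperstar$. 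I would present the argument in this last form, as it makes the ``surprising'' equivalence transparent: both sides are just restating \cref{prop:sheaf}.
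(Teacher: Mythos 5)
Your final argument---combining \cref{prop:sheaf} with the classical fact that a simplicial space $X$ is Segal if and only if $j\upperstar X$ is a sheaf~\cite{Berger:Cellular}---is exactly the paper's proof, so the version you say you would present is correct and takes the same route. However, the first converse argument you sketch is not just less slick but genuinely broken, and you should delete it rather than offer it as an option: the unit $\eta\colon A \to j\upperstar j\lowershriek(A)$ of the adjunction $j\lowershriek \isleftadjointto j\upperstar$ is \emph{not} an equivalence, because $j\colon \Deltainert\to\simplexcategory$ is faithful but not full (left Kan extension along a non-full inclusion need not restrict back to the identity). Concretely, by the formula of \cref{j_!A} one has $(j\upperstar j\lowershriek A)_1 = \sum_{n\in\N} A_n$, which differs from $A_1$ except in trivial cases; this discrepancy is precisely why the paper flags the corollary as ``surprising at first sight.'' Moreover, \cref{cor j! ff} asserts fully faithfulness of $j\lowershriek$ as a functor into $\kat{Decomp}$, whose morphisms are CULF maps, not into $\PrSh(\simplexcategory)$ with all simplicial maps, so it gives no information about the unit of the presheaf adjunction. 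In short: keep your second route (which coincides with the paper's two-line proof), and drop the unit argument entirely.
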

\begin{proof}
  It is well known that a simplicial space $X: 
  \simplexcategory\op\to\spaces$ is Segal if and only if $j\upperstar 
  X: \Deltainert\op\to\spaces$ is a sheaf~\cite{Berger:Cellular}. 
  The result now follows from Proposition~\ref{prop:sheaf}.
\end{proof}

%%%%%%%%%%%%%%%%%%%%%%%%%%%%%%%%%%%%%%%%%%%%%%%%%%
\subsection{Restriction \texorpdfstring{$\mathbb L$}{L}-species}
%%%%%%%%%%%%%%%%%%%%%%%%%%%%%%%%%%%%%%%%%%%%%%%%%%

We briefly comment on the relationship between free decomposition spaces and
certain species.

\begin{blanko}{Restriction species.}
  A {\em combinatorial species} \cite{Joyal:1981},
  \cite{Bergeron-Labelle-Leroux}, \cite{Aguiar-Mahajan} is a functor
  $F:\mathbb{B} \to \kat{Set}$, where $\mathbb{B}$ is the groupoid of
  finite sets and bijections. An {\em $F$-structure} on a finite set $S$ is by 
  definition an element in $F[S]$. Standard examples of $F$-structures are
  lists, trees, permutations, and so on. The theory of species is an 
  objective approach to exponential generating functions.
  In order to get a comultiplication of
  the set of $F$-structures, $F$ should furthermore be contravariantly
  functorial in injections: Schmitt~\cite{Schmitt:hacs} thus defined a
  {\em restriction species} to be a functor $R:
  \mathbb{I}\op\to\kat{Set}$. For $G\in R[S]$ an $R$-structure on a
  finite set $S$, the comultiplication is then given by
\begin{equation}\label{G}
\Delta(G) = \sum_{A+B=S} G| A \otimes G | B
\end{equation}
where the sum is over all splittings of the underlying set into two disjoint 
subsets,
and where $G| A$ denotes the restriction of $G$ along $A \subset S$.

Schmitt's construction was subsumed in decomposition space theory in
\cite{GKT:restr}, where the following more general notion was also introduced,
covering many interesting examples. Let $\mathbb{C}$ denote the category of
finite posets and convex monotone injections, i.e.~full sub-poset 
inclusions $f\colon C \to P$ for which intermediate points between points 
in $C$ are again in $C$. A {\em directed restriction
species} is a functor $\mathbb{C}\op\to\kat{Set}$. The resulting
comultiplication formula is very similar to \eqref{G}, except that the sum is
now over splittings where $A$ is required to be a downward closed 
sub-poset and $B$ an
upward closed sub-poset.
Schmitt's restriction species are the special 
case of directed restriction species supported on discrete posets.
\end{blanko}

\begin{blanko}{$\mathbb L$-species and restriction $\mathbb 
  L$-species.}
  There is another special case, which covers many of the examples of free
  decomposition spaces listed below, namely those directed
  restriction species that are supported on linear orders. Classically
  \cite[Ch.~5]{Bergeron-Labelle-Leroux}, {\em $\mathbb L$-species} are functors
  $\mathbb L^{\operatorname{iso}} \to \kat{Set}$, 
  where $\mathbb L^{\operatorname{iso}}$ is the groupoid of linear orders and
  monotone bijections (this groupoid is of course discrete). 
  Where species provide a combinatorial theory for 
  exponential generating functions, $\mathbb{L}$-species do the same for 
  ordinary generating functions~\cite[Ch.~5]{Bergeron-Labelle-Leroux}.
  Combining the notions of directed restriction species and 
  $\mathbb{L}$-species,
  we arrive at the
  notion of {\em restriction $\mathbb L$-species}, defined to be functors
  $$
  \mathbb{L}\op\to \kat{Set} ,
  $$
  where now
  $\mathbb L$ denotes the category of linear orders and convex monotone injections.
  The point is that the category $\mathbb{L}$ has a presentation like this:
  $$
  \begin{tikzcd}
	\un 0 \ar[r] &
	\un 1 \ar[r, "d^\top", shift left] \ar[r, "d^\bot"', shift right] &
	\un 2 \ar[r, "d^\top", shift left] \ar[r, "d^\bot"', shift right] & 
	\un 3  & \cdots && d^\top d^\bot = d^\bot d^\top .
  \end{tikzcd}
  $$
  Comparing with the presentation for $\Deltainert$  (see \eqref{eq deltainert} in \S\ref{ss lke}), we have: 

\end{blanko}

\begin{prop}\label{prop:restrL}
  A restriction $\mathbb L$-species is the same thing as a functor 
  $A: \Deltainert\op\to \kat{Set}$ for which the two face maps
  $\begin{tikzcd}[cramped]A_0 & A_1 \ar[l, shift
  left] \ar[l, shift right]\end{tikzcd}$ coincide.
\end{prop}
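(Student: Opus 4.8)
The plan is to compare the two presentations directly. Recall that $\Deltainert$ has generating morphisms $d^\top, d^\bot \colon [n] \to [n{+}1]$ subject to the single relation $d^\top d^\bot = d^\bot d^\top$, as displayed in \eqref{eq deltainert}. Likewise $\mathbb{L}$ has generators $\underline{0} \to \underline{1}$ (necessarily $d^\bot = d^\top$ here, since both are the unique injection) together with $d^\top, d^\bot \colon \underline{n} \to \underline{n{+}1}$ for $n \geq 1$, subject to $d^\top d^\bot = d^\bot d^\top$. So $\mathbb{L}$ is obtained from $\Deltainert$ by identifying the two parallel arrows $[0] \rightrightarrows [1]$, i.e.\ $\mathbb{L}$ is the quotient category $\Deltainert / (d^\top_0 = d^\bot_0)$.

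First I would make this identification of categories precise: there is an identity-on-objects functor $q \colon \Deltainert \to \mathbb{L}$ (sending $[n] \mapsto \underline{n}$ and each generator to the like-named generator), and I claim it exhibits $\mathbb{L}$ as the universal quotient inverting nothing but collapsing the relation $d^\bot_0 = d^\top_0$. This is immediate from the generators-and-relations presentations: a functor out of $\mathbb{L}$ is the same as a functor out of $\Deltainert$ that equalizes $d^\bot_0, d^\top_0 \colon [0] \to [1]$, because the only relation in $\mathbb{L}$ not already present in $\Deltainert$ is exactly this coincidence, and conversely adding it to $\Deltainert$'s relations yields $\mathbb{L}$'s. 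Then precomposition with $q$ gives a fully faithful functor $q^* \colon \operatorname{Fun}(\mathbb{L}\op, \kat{Set}) \to \operatorname{Fun}(\Deltainert\op, \kat{Set})$ whose essential image is precisely those presheaves $A$ with $A(d^\bot_0) = A(d^\top_0) \colon A_1 \to A_0$, i.e.\ those for which the two face maps $A_1 \rightrightarrows A_0$ coincide. Since restriction $\mathbb{L}$-species are by definition functors $\mathbb{L}\op \to \kat{Set}$, this is exactly the claimed identification.

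The only genuinely routine-but-necessary check is that $q$ really is well defined as a functor and that the presentation of $\mathbb{L}$ is the one displayed (in particular that there are no further relations, e.g.\ that $\mathbb{L}$ doesn't accidentally identify more morphisms once $d^\bot_0 = d^\top_0$ is imposed). This amounts to observing that both categories are (skeleta of) concrete categories of finite linear orders with specified injections, and that a convex monotone injection $\underline{m} \to \underline{n}$ is uniquely determined by the pair $(a,b)$ with $a$ points below the image and $b$ above, hence equals $(d^\top)^b (d^\bot)^a$; the relation $d^\top d^\bot = d^\bot d^\top$ is all that is needed to put any composite of generators into this normal form, and distinct $(a,b)$ give distinct morphisms except in degree $0$, where $d^\bot_0 = d^\top_0$. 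Once the normal forms are in hand, comparing with the identical description of $\Deltainert$ from \ref{Act=N^k}-style bookkeeping shows that the hom-sets of $\mathbb{L}$ are exactly those of $\Deltainert$ modulo the single identification, completing the argument.

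I expect the main (very mild) obstacle to be purely bookkeeping: being careful that ``the two face maps coincide'' is imposed only in the bottom degree and automatically propagates to give a presheaf genuinely defined on $\mathbb{L}$ — but this is forced, since once $A$ inverts\,/\,equalizes the generating relation, it factors through $q$ on all of $\Deltainert$ by the universal property of presentations. No homotopy theory is involved; everything takes place in ordinary categories and ordinary set-valued functors.
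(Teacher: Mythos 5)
Your argument is correct and is essentially the paper's own: the paper simply juxtaposes the displayed presentation of $\mathbb{L}$ with that of $\Deltainert$ (see \eqref{eq deltainert}) and reads off the proposition, which is exactly your comparison-of-presentations via the identity-on-objects quotient functor $q$ collapsing $d^\bot_0 = d^\top_0$. The extra bookkeeping you supply (normal forms $(d^\top)^b(d^\bot)^a$ and the resulting collapse of all maps out of the bottom object) just makes explicit what the paper leaves as an unproved observation.
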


The upshot is that restriction $\mathbb{L}$-species can be
given as input to the free decomposition space construction $j\lowershriek$.
In particular, presheaves $A\colon \Deltainert\op\to\kat{Set}$ for which $A_0 = *$
can be regarded as restriction $\mathbb{L}$-species, 
as will be the case in many of the examples below.

%%%%%%%%%%%%%%%%%%%%%%%%%%%%%%%%%%%%%%%%%%%%%%%%%%
\section{Examples in combinatorics}\label{section examples}
%%%%%%%%%%%%%%%%%%%%%%%%%%%%%%%%%%%%%%%%%%%%%%%%%%

The following selection of examples serves to illustrate the nature of free 
decomposition spaces
as a common origin of comultiplications of deconcatenation type.\footnote{Since these 
comultiplications are often the very simplest aspect of the combinatorial 
structure in question, we do not pretend to have made any contribution to the various 
application areas.} 
The protopypical example (cf.~\ref{words} below) is 
that of splitting a word into a prefix and a postfix in all ways, essentially undoing 
the familiar operation of concatenation.
Note however that
in many of the examples, the deconcatenation does not arise from a 
concatenation! More precisely, in these cases the free decomposition space is 
not Segal.

All the simplicial spaces in this section are actually simplicial sets.

%%%%%%%%%%%%%%%%%%%%%%%%%%%%%%%%%%%%%%%%%%%%%%%%%%
\subsection{Paths and words}
%%%%%%%%%%%%%%%%%%%%%%%%%%%%%%%%%%%%%%%%%%%%%%%%%%

\begin{blanko}{Graphs and paths.}
  For any directed graph (quiver) $\begin{tikzcd}[cramped]A_0 & A_1 \ar[l, shift
  left] \ar[l, shift right]\end{tikzcd}$, put
  $$
  A_n = A_1 \times_{A_0} \cdots \times_{A_0}
  A_1, \qquad n\geq 2.
  $$
  This is the length-$n$ path space of the graph. Since $A\colon 
  \Deltainert\op\to\kat{Set}$ is a sheaf
  by construction, $X:=j_!(A)$ is a category, by Proposition~\ref{prop:sheaf}; 
  it is of course the
  free category on the graph.
  
  This example is the starting point for a wealth of variations, some more exotic
  than others. As a first variation, we can decide to truncate at some length,
  so as to allow only paths that are of length at most $r$, that is, setting
  $A_n = A_1 \times_{A_0} \cdots \times_{A_0} A_1$, for $2\leq n\leq r$ and
  $A_n=\emptyset $ for $n>r$. This is clearly still a valid
  $\Deltainert\op$-diagram, but the resulting free decomposition space is no
  longer Segal. Truncations like this were considered by Bergner et
  al.~\cite{Bergner-Osorno-Ozornova-Rovelli-Scheimbauer:1609.02853}.
  An extreme case of this is to truncate at $n=1$, keeping only the graph 
  itself $\begin{tikzcd}[cramped]A_0 & A_1 \ar[l, shift
  left] \ar[l, shift right]\end{tikzcd}$ and 
  setting $A_n = \emptyset$ for $n\geq 2$. The
  resulting free decomposition space $X := j\lowershriek (A)$ was considered
  already by Dyckerhoff and Kapranov~\cite[Ex.~3.1.1]{Dyckerhoff-Kapranov:1212.3563}.
\end{blanko}

\begin{blanko}{Shifting up and shifting down.}
  Further variations can be obtained by shifting $A$ up or down before applying 
  $j\lowershriek$. For example, we can define $A'$ by shifting up all spaces, 
  putting $A'_0=*$ and letting $A'_n := A_{n-1}$.
  When taking now $X:=j\lowershriek (A')$ we get
  $$
  X_0 = *, \qquad X_1 = * + \sum_{n\in \N} A_n
  $$
  and also
  \[
    X_2 = * + \sum_{n_1 + n_2 \geq 1} A_{n_1+n_2-1} = 
	* + \sum_{n\in \N} A_n + \sum_{n\in \N} A_n + \sum_{n_1,n_2\in \N} 
	A_{n_1+n_2+1} .
  \]

  Easier to understand is the shifting down, setting $A'_n := A_{n+1}$.
  This can be described by starting with $\begin{tikzcd}[cramped]A_1 & A_2 \ar[l, shift
  left] \ar[l, shift right]\end{tikzcd}$, and then observe that $A_3 = A_1 
  \times_{A_0} A_1 \times_{A_0} A_1 = A_2 \times_{A_1} A_2$.
  We see that $A'$ is the path space on another graph, namely 
  the graph whose vertices are the old 
  edges, and where an edge from $f$ to $g$ is a $2$-path in which $f$ is the first 
  leg and $g$ is the second leg. 
  When we now set $X:=j\lowershriek (A')$
  we get for $X_1$ the set of all paths of length $\geq 1$, and for $X_2$ 
  the set of paths with a marked step. The inner face map forgets the mark.
  The face map $d_0$ deletes all steps before the marked edge $e$ and keeps $e$ 
  and all following steps. The face map $d_2$ deletes everything after $e$,
  and keeps everything up to and including $e$. 
   (This $X :=j\lowershriek (A')$ is actually (the nerve of) a preorder on the set of 
  edges: we can say that 
  $f\leq g$ if $d_0(f)= d_1(g)$.)

  Similarly the path $\Deltainert\op$-diagram of a quiver can be shifted down 
  $d$ places. Then $X_1$ is the set of all paths of length $\geq d$, and 
  $X_2$ is the set of all paths of length $\geq d$ with a marked $d$-path 
  somewhere in the middle.
  The resulting comultiplication will then comultiply a path by selecting a 
  $d$-path $\gamma$ somewhere  inside it, and then returning the prefix (including 
  $\gamma$)
  on the left, and returning the postfix (also including $\gamma$) on the right.
  The group-like elements are thus the $d$-paths.
\end{blanko}

\begin{blanko}{Words.}\label{words}
  Let $S$ be an alphabet, whose elements we call {\em symbols}, reserving the 
  term {\em letter} for the entries of a word. (Thus $cbabb$ is a $5$-letter 
  word comprising three symbols.) Although words in an alphabet can be seen as special
  cases of paths in a graph, namely in graphs with only one vertex 
  $\begin{tikzcd}[cramped]* & S \ar[l, shift
  left] \ar[l, shift right]\end{tikzcd}$, this case deserves special 
  mention for its importance in combinatorics.
  We define $W(S) \colon \Deltainert\op\to\kat{Set}$ by letting $W(S)_n$ be
  the set of words of length $n$; the face maps delete the 
  first or last letter.  In the resulting free 
  decomposition space $X:=j\lowershriek (WS)$, we have $X_1$ the set of all words, 
  and $X_2$ the set of all 
  words with a splitting, say $s_1 s_2 s_3 | s_4 s_5$, and the outer face maps
  return those prefix and postfix words. The comultiplication of the incidence 
  coalgebra is the standard deconcatenation, splitting a word in all possible 
  ways into two parts.
  
  This example is actually Segal.  But we can disturb it by the same
  techniques: truncate (say, to allow only words of length between $4$ and 
  $13$); that is, put the words of length $4$ in degree $0$, etc.
  The comultiplication in the incidence coalgebra will now sum over 
  all ways to select a (convex) $4$-letter subword (so that's a middle
  part), and then include that middle subword in both tensor factors
  of the comultiplication.
  
  An interesting special case of this is {\em nonempty words}.  This is to
  put the length-$n$ words in degree $n-1$. 
  Here the resulting comultiplication splits
  a word at a letter (not between two letters), and that letter then
  forms part of both sides of the split. (This coalgebra came up recently in 
  connection with the Baez--Dolan construction~\cite{Kock:1912.11320}.)
  This decomposition space can easily be interpreted as a category:
  the objects are the symbols, and a word from $x$ to $y$ is any
  word whose first letter is $x$ and whose last letter is $y$.
  Such arrows are composed by gluing words along their one-letter
  overlap.
\end{blanko}

\begin{blanko}{Quasi-symmetric functions, I.}\label{QSym:I}
  When the alphabet is $S=\N_+$, the set of positive integers, then the decomposition
  space 
  $$
  Q := j\lowershriek (W\N_+)
  $$ 
  is the decomposition space whose
  incidence coalgebra is the coalgebra of quasi-symmetric functions
  $\QSym$ in the $M$-basis~\cite{GKT:QSym}. Recall (for example from 
  \cite{Stanley:volII}) that quasi-symmetric functions 
  are certain power series of 
  bounded degree in countably many variables $x_1,x_2,\ldots$:  
  for each word $w \in Q_1$ of length $k$ there is a quasi-symmetric function
  $$
  M_w := \sum_{i_1 < i_2 <\cdots < i_k} x_{i_1}^{w_1} x_{i_2}^{w_2} \cdots
  x_{i_k}^{w_k} ,
  $$
  and the ring of quasi-symmetric functions is the linear span of these 
  so-called {\em monomial} quasi-symmetric functions $M_w$. The set $Q_1$ of all
  words in $\N_+$ thus indexes this {\em monomial basis}.
  For example,
  to a word such as 
  $(23114)$ corresponds the quasi-symmetric function $M_{23114} = 
  \sum_{i_1 < i_2 <\cdots < i_5} x_{i_1}^2 x_{i_2}^3 x_{i_3}^1 x_{i_4}^1 
  x_{i_5}^4$.
  Although the actual power series are important in many applications, the
  algebraic structure is governed by the combinatorics of words in $\N_+$
  (in the $M$-basis and in several related bases), and in the following we shall not 
  need the actual power series. In particular the coalgebra 
  structure of $\QSym$ can be described purely in terms of words: the 
  comultiplication is
  simply word splitting (deconcatenation).
  (This is not the most interesting aspect of the Hopf algebra of 
  quasi-symmetric functions, but it is the aspect that arises from a
  free decomposition space.)
\end{blanko}

\begin{blanko}{WQSym and FQSym.}
  Word quasi-symmetric functions and free quasi-symmetric 
  functions are certain variations of the previous example. 
  We shall not give the definitions here, but 
  content ourselves to describe the combinatorics of the comultiplications
  in these Hopf algebras.
  (The decomposition spaces here are still from \cite{GKT:QSym},
  but the $j_!$ interpretation is new.) We continue with the ordered 
  alphabet $S= \N_+$. A word is {\em packed} if whenever a
  symbol occurs in it, then all smaller symbols occur too. If the first or last
  letter of a packed word is omitted, and if that letter was the only occurrence
  of a symbol, then the result is not again a packed word, but there is a
  canonical way to `pack' it, by shifting down all bigger symbols. With this
  prescription the sets $A_n$ of packed words of length $n$
  assemble into a $\Deltainert\op$-diagram, and we
  get a free decomposition space $X:=j\lowershriek (A)$, where $X_1$ is the set of
  all packed words, and $X_2$ is the set of all packed words with a word
  splitting. The resulting comultiplication is that of the $F$-basis in the Hopf
  algebra of word quasi-symmetric functions $\operatorname{WQSym}$ 
  (cf.~\cite{Bergeron-Zabrocki}, \cite{Hivert-Novelli-Thibon:0605262}).
    
  The same arguments apply to packed words without repetition of symbols.
  A packed word without repetition is the same thing as a permutation.
  We obtain the comultiplication of the Hopf algebra $\operatorname{FQSym}$ 
  (free quasi-symmetric functions, also called the Malvenuto--Reutenauer Hopf 
  algebra, after \cite{Malvenuto-Reutenauer}), in the $F$-basis.
  
  There is a different way to assemble packed words into a free decomposition 
  space: let $A_n$ denote the set of packed words on $n$ symbols (any length), and let 
  $d_\top$ delete all occurrences of the largest symbol, and let $d_\bot$ 
  delete all occurrences of the smallest symbol (decrementing all larger 
  symbols, for the word to remain packed). The resulting free decomposition 
  space $X:= j \lowershriek (A)$ has $X_1$ the set of all packed words, and $X_2$
  the set of all packed words with a linear splitting of the set of employed 
  symbols into what we can call lower and upper symbols. Now $d_2 : X_2 \to X_1$ 
  deletes from a word all upper symbols, while $d_0$ deletes all lower symbols
  (decrementing the remaining symbols until the word is packed again).
\end{blanko}

\begin{blanko}{Parking functions.}
  A {\em parking function} (see~\cite{Stanley:PFNP}) is a word $w$ in the
  alphabet $\N_+$ such that if reordered to form a monotone word $m$, then we
  have $m_i \leq i$, $\forall i$. Let $A_n$ denote the set of parking functions
  of length $n$. The face maps are defined by deleting the first or last letter.
  This may violate the parking condition, but there is a canonical way to
  `parkify,' by shifting down all bigger symbols (see
  \cite{Novelli-Thibon:PQSym} for details). With this prescription the sets
  $A_n$ assemble into a $\Deltainert\op$-diagram, and we get a free
  decomposition space $X:=j\lowershriek (A)$. Here $X_1$ is the set of all
  parking functions, and $X_2$ is the set of all parking function with a word
  splitting. The resulting comultiplication is that of the $F$-basis in the Hopf
  algebra of parking quasi-symmetric functions $\operatorname{PQSym}$ of
  \cite{Novelli-Thibon:PQSym}.

  There is another way to assemble parking functions into a
  $\Deltainert\op$-diagram. A {\em breakpoint} of a length-$\ell$ parking
  function is an $i \in \{0,\ldots,\ell\}$ such that there are exactly $b$
  occurrences of symbols smaller or equal to $i$. For example the breakpoints of
  the parking function $162436166$ are $0$, $5$, $9$. Let $A_n$ denote the set
  of parking functions (of any length) with precisely $n+1$ breakpoints. There
  are face maps $\begin{tikzcd}[cramped]A_{n-1} & A_n \ar[l, shift left] \ar[l,
  shift right]\end{tikzcd}$ given as follows: if the breakpoints of $w$ are
  $(0=b_0,\ldots,b_n=\ell)$, then $d_\top$ deletes all occurrences of symbols
  $>b_{n-1}$, and $d_\bot$ deletes all occurrences of symbols $\leq b_1$;
  this involves parkification. The resulting $X:=j\lowershriek (A)$ has $X_1$
  the set of all parking functions, and $X_2$ the set of all parking functions
  with a chosen breakpoint. The resulting comultiplication is that of the
  $G$-basis in the Hopf algebra of parking quasi-symmetric functions
  $\operatorname{PQSym}$ (see~\cite{Novelli-Thibon:PQSym} again).
\end{blanko}

\begin{blanko}{Functorialities.}
  All the constructions are functorial: given a homomorphism of 
  graphs $G \to H$, there is induced a CULF map $X_G \to X_H$, and this works 
  for all the variations mentioned.
  
  Similarly, given a map of alphabets $S \to T$, there is induced a CULF functor
  for the associated free decomposition spaces $j\lowershriek(WS) \to
  j\lowershriek(WT)$. In particular, it is an interesting case when the alphabet
  $S$ is positively graded, meaning that it has a map to $\N_+$. This gives a
  CULF map $j\lowershriek (WS) \to j\lowershriek (W\N_+) = Q$, the decomposition space for $\QSym$,
  which we shall come back to below.
  
  For a graph $\begin{tikzcd}[cramped]V & E \ar[l, shift
  left] \ar[l, shift right]\end{tikzcd}$, consider words on the set of edges 
  $E$. Then there is a morphism of $\Deltainert\op$-diagrams from paths to 
  words. This can be seen as coming from the graph homomorphism from 
  $\begin{tikzcd}[cramped]V & E \ar[l, shift
  left] \ar[l, shift right]\end{tikzcd}$ to $\begin{tikzcd}[cramped]1 & E \ar[l, shift
  left] \ar[l, shift right]\end{tikzcd}$ (collapsing all vertices to a single 
  vertex).
\end{blanko}

%%%%%%%%%%%%%%%%%%%%%%%%%%%%%%%%%%%%%%%%%%%%%%%%%%
\subsection{Further examples of deconcatenations}
%%%%%%%%%%%%%%%%%%%%%%%%%%%%%%%%%%%%%%%%%%%%%%%%%%

\tikzset{
  ncone/.pic={
	\draw (0,0)--(0,0.2);
  }
}

\tikzset{
  nctwo/.pic={
    \draw (0,0)--(0,0.2);
	\draw (0.1,0)--(0.1,0.2);
  }
}

\tikzset{
  nctwoW/.pic={
    \draw (0,0.2)--(0,0)--(0.1,0)--(0.1,0.2);
  }
}

\tikzset{
  nctwoWW/.pic={
    \draw (0,0.2)--(0,0)--(0.2,0)--(0.2,0.2);
  }
}

\tikzset{
  nconeoneinsidetwoWW/.pic={
	\path (0,0) pic {ncone};
    \path (0.1,0) pic {nctwoWW}; 
	\path (0.2,0.1) pic {ncone};
  }
}

\newcommand{\nconeoneinsidetwoWW}{\,\tikz[x=1.3cm,y=1.3cm]{
  \path (0,0) pic {nconeoneinsidetwoWW};
}\,}

\begin{blanko}{Noncrossing partitions.}
  A partition $\pi=\{\pi_1,\ldots,\pi_k\}$ of a linearly ordered set 
  $\mathbf{n}=\{1,2,\ldots,n\}$ is called {\em noncrossing} where there are no
  $a,b\in \pi_i$ and $c,d\in \pi_j$ with $i\neq j$ such that $a<c<b<d$.
  For example, the partition $\{\{1\},\{2,4\},\{3\}\} =$
  \nconeoneinsidetwoWW\ is noncrossing,
    whereas  $\{\{1,3\},\{2,4\}\} =$
  \begin{tikzpicture}[scale=1.2]
  \draw (0,0.2)--(0,0)--(0.2,0)--(0.2,0.2); \draw
  (0.1,0.14)--(0.1,-0.06)--(0.3,-0.06)--(0.3,0.14); \end{tikzpicture} is
  crossing. The $a<c<b<d$ condition is just the formalization of the intuitive
  idea of crossing in such pictures (see \cite{Stanley:PFNP} for background). 
  
  Let $A_n$ denote the set of noncrossing partitions of $\mathbf{n}$.
  Given a noncrossing partition $\pi\in A_n$, one can obtain a new noncrossing 
  partition in $A_{n-1}$ by deleting the element $n\in \mathbf n$, or by deleting
  the element $1\in \mathbf{n}$ (and then standardizing by shifting all the 
  elements one down).
  These assignments define the face maps of a $\Deltainert\op$-diagram, and 
  we get thus a free decomposition space 
  $X:=j\lowershriek (A)$, where $X_1$ is the set of all noncrossing partitions,
  and $X_2$ is the set of all noncrossing partitions with a marked gap: 
  a {\em gap}  of $\mathbf{n}$ just a natural number $g$ from $0$ to $n$ 
  thought of as splitting the elements into those $\leq g$ and those $>g$.
  The resulting comultiplication is given  (for $\pi\in A_n$) by
  $$
  \Delta(\pi) = \sum_{\mathbf{a}+\mathbf{b}=\mathbf{n}} \pi_{\mid \mathbf a} 
  \otimes \pi_{\mid \mathbf b} .
  $$
  Here the sum is over the splitting of $\mathbf n$ into an initial segment and
  a final segment, and $\pi_{\mid \mathbf a}$ and $ \pi_{\mid \mathbf b}$ denote
  the restriction of $\pi$ to these two subsets (and standardizing).
\end{blanko}

\begin{blanko}{Dyck paths.}
  A {\em Dyck path} is a lattice path in $\N\times \N$ starting at $(0,0)$ and
  ending at $(2\ell,0)$ (for some $\ell\in \N$), taking only steps of type
  $(1,1)$ and $(1,-1)$. The height of a Dyck path is the maximal second
  coordinate. Let $A_n$ be the set of Dyck paths (varying $\ell$) of height $n$.
  The face maps $\begin{tikzcd}[cramped]A_n & A_{n+1} \ar[l, shift left] \ar[l,
  shift right]\end{tikzcd}$ are given by clipping the path, either at the top or
  at the bottom, and sliding the disconnected pieces left (and down) until they
  meet up again, as exemplified here at $\begin{tikzcd}[cramped]A_3 & A_4 \ar[l, shift left] \ar[l,
  shift right]\end{tikzcd}$:
\begin{center}
  \begin{tikzpicture}
	\begin{scope}
	  \draw[step=0.2,black,ultra thin] (0.0,0.0) grid (2.8,0.8);
	  \draw[blue, thick] (0.0,0.0) -- (0.2,0.2) -- (0.4,0.0) -- (0.4,0.0) -- (0.6,0.2); 
	  \draw[thick] (0.6,0.2) -- (1.0,0.6);
	  \draw[red, thick] (1.0,0.6) -- (1.2,0.8) -- (1.4,0.6) -- (1.6,0.8) -- (1.8,0.6);
	  \draw[thick] (1.8,0.6) -- (2.2,0.2) -- (2.4,0.4) -- (2.6,0.2);
	  \draw[blue, thick] (2.6,0.2) -- (2.8,0.0);
% 	  \draw[thick] (0.0,0.2) -- (2.8,0.2);
% 	  \draw[thick] (0.0,0.6) -- (2.8,0.6);
	\end{scope}
	
	% red:
	\begin{scope}[shift={(-3.4,0.6)}]
	  \draw[step=0.2,black,ultra thin] (0.0,0.0) grid (2.0,0.6);
	  \draw[thick] (0.0,0.0) -- (0.4,0.4); 
	  \draw[red, thick] (0.4,0.4) -- (0.6,0.6) -- (0.8,0.4) -- (1.0,0.6) -- (1.2,0.4);
	  \draw[thick] (1.2,0.4) -- (1.6,0.0) -- (1.8,0.2) -- (2.0,0.0);
	\end{scope}

	% blue:
	\begin{scope}[shift={(-3.4,-0.3)}]
	  \draw[step=0.2,black,ultra thin] (0.0,0.0) grid (2.0,0.6);
	  \draw[blue, thick] (0,0) -- (0.2,0.2) -- (0.4,0.0) -- (0.6,0.2);
	  \draw[thick] (0.6,0.2) -- (1.0,0.6) -- (1.4,0.2) -- (1.6,0.4) -- (1.8,0.2);
	  \draw[blue, thick] (1.8,0.2) -- (2.0,0.0);
	\end{scope}

	\node at (-0.7,1.0) {$d_\bot$};
	\node at (-0.7,0.7) {$\longmapsfrom$};
	\node at (-0.7,-0.2) {$d_\top$};
	\node at (-0.7,0.1) {$\longmapsfrom$};
  \end{tikzpicture}
\end{center}

  Then $X := j\lowershriek (A)$ has $X_1$ the set of all Dyck
  paths (all lengths and all heights), $X_2$ is the set of all Dyck paths with a
  marked level, and more generally $X_k$ is the set of all Dyck paths with $k-1$
  marked levels (which may coincide). The inner face maps delete a level marking
  (without affecting the path), whereas the outer face maps clip the path
  outside the outermost level. For example, the outer face maps involved in the
  formula for comultiplication in the incidence coalgebra, namely
  $\begin{tikzcd}[cramped]X_1 & X_2 \ar[l, shift left] \ar[l, shift
  right]\end{tikzcd}$ , are exemplified here:
\begin{center}
  \begin{tikzpicture}
	\begin{scope}
	  \draw[step=0.2,black,ultra thin] (0.0,0.0) grid (2.8,0.8);
	  \draw[blue, thick] (0,0) -- (0.2,0.2) -- (0.4,0.4);
	  \draw[red, thick] (0.4,0.4) -- (0.6,0.6) -- (0.8,0.8) -- (1.0,0.6) -- (1.2,0.4);
	  \draw[blue, thick] (1.2,0.4) -- (1.4,0.2) -- (1.6,0.4);
	  \draw[red, thick] (1.6,0.4) -- (1.8,0.6) -- (2.0,0.4);
	  \draw[blue, thick] (2.0,0.4) -- (2.2,0.2) -- (2.4,0.4) -- (2.6,0.2) -- (2.8,0.0);
	  \draw[thick] (0.0,0.4) -- (2.8,0.4);
	\end{scope}
	
	% red:
	\begin{scope}[shift={(-2.7,0.6)}]
	  \draw[step=0.2,black,ultra thin] (0.0,0.0) grid (1.2,0.4);
	  \draw[red, thick] (0.0,0.0) 
	  -- (0.2,0.2) -- (0.4,0.4) -- (0.6,0.2) -- (0.8,0.0) -- (1.0,0.2) -- (1.2,0.0);
	\end{scope}

	% blue:
	\begin{scope}[shift={(-2.9,-0.2)}]
	  \draw[step=0.2,black,ultra thin] (0.0,0.0) grid (1.6,0.4);
	  \draw[blue, thick] (0,0) -- (0.2,0.2) -- (0.4,0.4) -- (0.6,0.2) -- (0.8,0.4)
	  -- (1.0,0.2) -- (1.2,0.4) -- (1.4,0.2) -- (1.6,0.0);
	\end{scope}

	\node at (-0.7,1.0) {$d_0$};
	\node at (-0.7,0.7) {$\longmapsfrom$};
	\node at (-0.7,-0.2) {$d_2$};
	\node at (-0.7,0.1) {$\longmapsfrom$};
  \end{tikzpicture}
\end{center}

  There is another way to assemble Dyck paths into a free decomposition space: A
  {\em baseline point} of a Dyck path is one with second coordinate $0$. Let
  $A_n$ be the set of Dyck paths (any length and height) with $n-1$ baseline
  points (and $A_0$ consists of the trivial Dyck path). Then $X:=j\lowershriek
  (A)$ has in degree $1$ the set of all Dyck paths, and in degree $2$ the set of
  all Dyck paths with a chosen baseline point. The inner face maps forget
  baseline points, and the outer face maps delete the portion before the first
  or after the last chosen baseline point. For example
\begin{center}
  \begin{tikzpicture}
	\begin{scope}[shift={(0.0,0.2)}]
	  \draw[step=0.2,black,ultra thin] (0.0,0.0) grid (2.0,0.4);
	  \draw[blue, thick] (0,0) -- (0.2,0.2) -- (0.4,0.0) -- (0.6,0.2)-- (0.8,0.0);
	  \draw[red, thick] (0.8,0.0) -- (1.0,0.2) -- (1.2,0.4) -- (1.4,0.2) 
	  -- (1.6,0.0) -- (1.8,0.2) -- (2.0,0.0);
	  \fill (0.8,0.0) circle[radius=0.05];
	\end{scope}
	
	% red:
	\begin{scope}[shift={(-2.6,0.6)}]
	  \draw[step=0.2,black,ultra thin] (0.0,0.0) grid (1.2,0.4);
	  \draw[red, thick] (0.0,0.0) 
	  -- (0.2,0.2) -- (0.4,0.4) -- (0.6,0.2) -- (0.8,0.0) -- (1.0,0.2) -- (1.2,0.0);
	\end{scope}

	% blue:
	\begin{scope}[shift={(-2.4,0.005)}]
	  \draw[step=0.2,black,ultra thin] (0.0,0.0) grid (0.8,0.2);
	  \draw[blue, thick] (0,0) -- (0.2,0.2) -- (0.4,0.0) -- (0.6,0.2)-- (0.8,0.0);
	\end{scope}

	\node at (-0.7,1.0) {$d_0$};
	\node at (-0.7,0.7) {$\longmapsfrom$};
	\node at (-0.7,-0.2) {$d_2$};
	\node at (-0.7,0.1) {$\longmapsfrom$};
  \end{tikzpicture}
\end{center}

  This construction is actually just an instance of the
  general word example, namely where the alphabet is the set of irreducible Dyck
  paths (meaning Dyck paths whose only baseline points are the start and the
  finish). The first construction is not of this form, as can be seen by the fact
  that an element in $X_2$ contains more information than its two layers.
\end{blanko}

\begin{blanko}{Layered posets (and quasi-symmetric functions, II).}\label{blanko layered posets}
  An {\em $\mathbf n$-layered poset} is a poset $P$ equipped with a monotone map
  to an ordinal $\mathbf n$; the fibers of this map are referred to as layers.
  Let $A_n$ denote the set of (iso-classes of) $\mathbf n$-layered posets.
  We define face maps
  $\begin{tikzcd}[cramped]A_{n-1} & A_n \ar[l, shift left] \ar[l,
  shift right]\end{tikzcd}$
  by
  deleting all elements in layer $n$, respectively deleting all elements in 
  layer $1$ (and
  then shifting down all layers to obtain an $(\mathbf{n-1}$)-layered poset. In the
  resulting free decomposition space $X:=j\lowershriek (A)$, we have $X_1$ the
  set of all layered posets (for all $n\in \N$) and $X_2$ the set of layered
  posets with an ordinal-sum splitting of the ordinal $\mathbf n$ into a initial segment 
  and a final segment.
  The resulting comultiplication is given by
  $$
  \Delta(P) = \sum_{\mathbf{a}+\mathbf{b}=\mathbf{n}} P_{\mid \mathbf a} 
  \otimes P_{\mid \mathbf b} ,
  $$
  where the sum is over ordinal-sum splittings,  
  and $P_{\mid \mathbf a}$ and $ P_{\mid \mathbf b}$ denote
  the restriction of $P$ along the ordinal-sum inclusions $\mathbf a \subset 
  \mathbf n \supset \mathbf b$.

  Many variations on this construction are possible, for example by demanding
  the monotone maps $P \to \mathbf n$ to be injective, surjective, bijective, or
  by putting constraints on $P$. If we require $P$ to be a linear poset and
  demand $P \to \mathbf n$ to be surjective, then we recover again the
  decomposition space $Q$ of quasi-symmetric functions from
  Example~\ref{QSym:I}. Indeed, to give a surjective monotone map $\mathbf m \to
  \mathbf n$ is the same as giving the length-$n$ word $(m_1,\ldots,m_n)$ of the
  sizes of the layers, so that the set $A_n = \{ \mathbf m \to \mathbf n\}$ is
  identified with $W(\N_+)_n$, and we end up with $j\lowershriek (A) = Q$. In
  fact, the viewpoint of monotone surjections is the original description of $Q$
  from \cite{GKT:QSym}.
\end{blanko}

\begin{blanko}{Heap orders, scheduling, and sequential processes.}
  In the previous class of examples,
  if we allow general posets $P$, but demand $P\to \mathbf n$ to be bijective,
  we arrive precisely at the notion of {\em heap-order}, i.e.~a order-compatible 
  numbering of the elements in the poset. (See also Stanley~\cite{Stanley:1972} 
  who used the terminology `labelled posets'.)
  In computer science, this is also called a
  `topological sort'~\cite{Knuth:TAOCP1}. They come up
  in connection with sorting and efficient data structures
  (most notably heap-ordered planar binary trees). A similar idea occurs in
  scheduling problems. In this case, each poset element is interpreted as a 
  task, and the order relation of the poset expresses dependency among tasks.
  A map $P \to \mathbf n$ then encodes a scheduling of the tasks subject to 
  the constraints.
  
  Mild generalizations cover many other situations where one
  considers sequences of computation steps, such as labelled
  transition systems (see for example~\cite{Winskel-Nielsen:1995} or
  \cite{HandbookOfProcessAlgebra}), Petri nets (see for
  example~\cite{DBLP:books/sp/Reisig85a}), or rewrite systems (such as double-pushout rewrite systems of graphs or other adhesive categories,
  as in~\cite{DBLP:conf/gg/CorradiniMREHL97}). It would take us too far afield
  to go into details with these examples, but the idea is clear: in each case
  there is a $\Deltainert\op$-diagram $A$, where $A_n$ is the set of sequences of
  length $n$, and where the face maps shorten the sequence at its beginning
  or at its end. In some situations
  one cannot simply concatenate such sequences, but deconcatenation is always possible,
  and the decomposition-space viewpoint can then be fruitful
  (see \cite{Kock:2005.05108} for Petri nets, and \cite{BehrKock}
  for double-pushout rewriting, where freeness of the decomposition spaces
  actually plays a role).
\end{blanko}

\subsection{Simplices in simplicial sets}

\begin{blanko}{Decomposition space of simplices.}
  For any simplicial set $X$, one can 
  restrict to $\Deltainert \subset \simplexcategory$ and then left Kan extend back to get the free
  decomposition space $j\lowershriek j\upperstar (X)$, the decomposition space of
  all simplices of $X$. If $X$ is the nerve of a category, this construction
  gives the free category on the underlying directed graph of $X$. The following
  subtle variation is much richer, though.
\end{blanko}

\begin{blanko}{The decomposition space of nondegenerate simplices.}
  Instead of considering all simplices, we restrict to nondegenerate simplices.
  Suppose that $X$ is a simplicial set with the property that outer faces of nondegenerate simplices are again nondegenerate.\footnote{Or more generally, $X$ could be a  `complete stiff simplicial space' \cite[\S4]{GKT2}.}
  This occurs, for instance, when $X$ is a discrete decomposition space.
  Let $\nondeg X_n \subset X_n$ denote the set of nondegenerate simplices.
  One can form the {\em
  decomposition space $J(X)$ of nondegenerate simplices}  
  \cite{GKT:QSym}, by first constructing the
  $\Deltainert\op$-diagram
  $$
  \begin{tikzcd}[cramped]\nondeg X_0 & \nondeg X_1 \ar[l, shift
  left] \ar[l, shift right]& \nondeg X_2 \ar[l, shift
  left] \ar[l, shift right] & \cdots\end{tikzcd}
  $$
  and then setting $J(X) := j\lowershriek(\nondeg X)$. 
  We have
  \begin{eqnarray*}
  J(X)_0 & = & X_0 \\
  J(X)_1 & = & \sum_{n\in \N} \nondeg X_n ,
  \end{eqnarray*}
  the set of all nondegenerate simplices in all dimensions.
  In higher simplicial dimension, it has {\em subdivided} nondegenerate 
  simplices. Precisely, a $k$-simplex of $J(X)$ is a diagram
  $$
  \Delta^k \actto \Delta^n \stackrel{\text{nondeg}}{\longrightarrow} X
  $$
  where the second map determines the nondegenerate $n$-simplex in $X$ and the 
  active map $\Delta^k \actto \Delta^n$ gives the subdivision, whereby 
  the $n$-simplex is subdivided into $k$ `stages.' (Note that if $X$ is Segal, 
  then $J(X)$ will be Segal again.)

  The decomposition space of nondegenerate simplices seems to be interesting in 
  general. (See \cite{BehrKock} for a recent use in rewriting theory.)
  One reason for its importance is the general link with 
  quasi-symmetric functions:
\end{blanko}

\begin{blanko}{Quasi-symmetric functions, III.}
  In the special case where the decomposition space $X$ is the nerve of the monoid
  $(\N,+)$, then the $\Deltainert\op$-diagram of nondegenerate simplices
  has $\N_+^n$ in degree $n$, and the free decomposition space is
  $$
  J(B\N) = Q
  $$ 
  the decomposition space of words in $\N_+$,
  whose
  incidence coalgebra is $\QSym$, the coalgebra of quasi-symmetric
  functions already visited in \ref{QSym:I} and \ref{blanko layered posets} (cf.~\cite{GKT:QSym}).
\end{blanko}

\bigskip

\noindent {\bf Acknowledgments.} 
This work was supported by a grant from the Simons Foundation (\#850849, PH).
JK gratefully acknowledges support from grants
MTM2016-80439-P and PID2020-116481GB-I00 (AEI/FEDER, UE) of Spain and
2017-SGR-1725 of Catalonia, and was also supported through the Severo Ochoa and
Mar\'ia de Maeztu Program for Centers and Units of Excellence in R\&D grant
number CEX2020-001084-M.

\bibliographystyle{scplain}
\bibliography{2-segal}

\end{document}